\newtheorem{theorem}{Theorem}[section]
\newtheorem{proposition}{Proposition}[section]
\newtheorem{lemma}{Lemma}[section]
\newtheorem*{thmA}{Theorem A}
\newtheorem*{thmB}{Theorem B}
\newtheorem*{thmC}{Theorem C}
\newtheorem{remark}{Remark}[section]
\theoremstyle{definition}
\newtheorem{definition}{Definition}[section]
\subjclass{}
\keywords{}
\title[Open Set-valued map]{Spectral decomposition and entropy for open set-valued maps}
\author{Rafael da C. Pereira}
\email{rafaelcpereira@ufmg.br}
\author{Túlio Vales}
\email{tulio.ferreira@ufv.br}
\author{Cássio H. Vieira Morais}
\email{cassio.morais@ufes.br}
\address{}
 \newcommand{\graf}{\operatorname{graph}}
  \newcommand{\orb}{\operatorname{Orb}}
 \newcommand{\htop}{h_{\text{top}}}
\newcommand\restric[2]{ {
	\left.\kern-\nulldelimiterspace 
	#1 
	\vphantom{\big|} 
	\right|_{#2}
	}}
\begin{document}



\begin{abstract}
In this paper, we study the dynamics of set-valued maps whose graphs are open and such that the image of each point is an open and connected set.
Building upon the work of P.~Duarte and M.~Torres, who introduced and analyzed the combinatorial structure of the final recurrent set, we incorporate the notions of transitivity and mixing, thereby bringing their framework into the spirit of Smale’s spectral decomposition theorem.
We also demonstrate that such maps exhibit infinite topological entropy, and we establish a connection between transitive open set-valued maps and transitive Anosov diffeomorphisms.
\end{abstract}

\maketitle

\section{Introduction}\label{sec.Intro}

One of the central problems in dynamical systems during the last century was the characterization of structurally stable systems.  
For many years, the so-called \emph{stability conjecture} stimulated intense research activity; see, for instance, \cite{peixoto1962structural} and \cite{smale1967differentiable} for two landmark results.  
The conjecture was ultimately proved by Mañé \cite{mane1987proof}.  
As a consequence of these and many other contributions, it is now well understood that structural stability is deeply connected to hyperbolicity and, in particular, to the shadowing property of $\varepsilon$-orbits. 

A discrete dynamical system is defined by a map $F:X \to X$ on a metric space. The orbits of $F$, which describe the evolution of points in $X$, are the sequences of iterates $(x_n)_{n \geq 0}$, where $x_n =F^n(x_0)$. An \emph{$\varepsilon$-orbit} is a sequence $(y_n)_{n \ge 0}$ satisfying 
$\operatorname{dist}(F(y_n), y_{n+1}) < \varepsilon$.  We say that the orbit $(x_n)$ $\delta$-shadows the $\varepsilon$-orbit $(y_n)$ if $\operatorname{dist}(x_n,y_n)< \delta$. These notions motivate the definition of the set-valued map $F_\varepsilon : X \to 2^X$, given by 
$F_\varepsilon(x) = B_\varepsilon(F(x))$,  
where $2^X$ denotes the collection of nonempty subsets of $X$, and $B_\varepsilon(x) \subset X$ is the open ball of radius $\varepsilon$ centered at $x$.  
Then $\varepsilon$-orbits of $F$ correspond to true orbits of $F_\varepsilon$.  
Under mild assumptions, each $F_\varepsilon$ is an example of an \emph{open set-valued map}, that is, a map $f : X \to 2^X$ whose graph $\operatorname{graph}(f) \subset X \times X$ is open and such that $f(x)$ is nonempty, open, and connected for every $x \in X$.  
When $X$ is compact, every open set-valued map $f$ contains $F_\varepsilon$ for some map $F : X \to X$ and $\varepsilon > 0$. 

A natural question is whether the study of open set-valued maps provides meaningful information about the underlying dynamics.  
Given dynamical information on $F_\varepsilon$, what can be inferred about $F$'s own behavior?  
To illustrate that such maps capture relevant dynamical properties, let us recall a classical open problem in hyperbolic dynamics: the \emph{Anosov transitivity conjecture}.  
It remains unknown whether every Anosov diffeomorphism is transitive (see \cite{smale1967differentiable}), although this is widely believed to be the case.  
The conjecture is true on infranilmanifolds \cite{manning1974there}, and so far, no examples of Anosov diffeomorphisms defined on manifolds that are not infranilmanifolds have been found.  
Recent advances toward this conjecture appear in \cite{BMelo15, Micena22, MicenaHertz22}, and in the forthcoming works \cite{BMelo25, Salgado25}.  
Interestingly, the analogous statement for flows is false \cite{FrankWilliams80}.

Regarding the transitivity of open set-valued maps, if $F$ is transitive, then $F_\varepsilon$ is transitive for all $\varepsilon > 0$.  
However, the converse is generally false.  
In this paper, we prove that the converse holds in the case of Anosov diffeomorphisms:

\begin{thmA}\hypertarget{teo.A}{}
Let \( f : X \to X \) be an Anosov diffeomorphism.  
If \( f_\varepsilon : X \to 2^X \) is transitive for every \( \varepsilon > 0 \), then \( f \) is transitive.
\end{thmA}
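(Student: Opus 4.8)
The plan is to exploit the shadowing property enjoyed by every Anosov diffeomorphism, which is precisely the ingredient that upgrades the ``chain-level'' recurrence encoded by $f_\varepsilon$ into genuine topological recurrence for $f$. I would establish transitivity of $f$ in its open-set formulation: for every pair of nonempty open sets $U,V\subset X$ there is some $N\ge 1$ with $f^N(U)\cap V\neq\emptyset$, which, since $X$ is a compact manifold and hence a second-countable Baire space without isolated points, is equivalent to the existence of a dense orbit. The heuristic is that the hypothesis ``$f_\varepsilon$ transitive for every $\varepsilon>0$'' is exactly chain transitivity of $f$, and it is a classical principle that chain transitivity together with shadowing forces transitivity.

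First I would fix nonempty open sets $U,V$, choose points $p\in U$ and $q\in V$, and pick $\delta>0$ small enough that $B_{2\delta}(p)\subset U$ and $B_{2\delta}(q)\subset V$. Invoking the shadowing property, I would then select $\varepsilon>0$ so that every finite $\varepsilon$-pseudo-orbit of $f$ is $\delta$-shadowed by a genuine orbit. The point of retaining a buffer of size $2\delta$ is that it will absorb the shadowing error at both endpoints.

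Next I would feed the open sets $B_\delta(p)$ and $B_\delta(q)$ into the transitivity hypothesis for $f_\varepsilon$, applied at this particular $\varepsilon$. Since orbits of $f_\varepsilon$ are exactly $\varepsilon$-pseudo-orbits of $f$, this yields a finite pseudo-orbit $(y_0,\dots,y_N)$ with $N\ge 1$, $y_0\in B_\delta(p)$, and $y_N\in B_\delta(q)$. Shadowing then produces a true orbit $(x_0,f(x_0),\dots,f^N(x_0))$ with $\operatorname{dist}(f^i(x_0),y_i)<\delta$ for every $i$. Combining the error estimates at the two endpoints with the triangle inequality gives $\operatorname{dist}(x_0,p)<2\delta$ and $\operatorname{dist}(f^N(x_0),q)<2\delta$, so $x_0\in U$ and $f^N(x_0)\in V$; hence $f^N(U)\cap V\neq\emptyset$, and $f$ is transitive.

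The argument is short because the genuine content is the implication ``chain transitivity $+$ shadowing $\Rightarrow$ transitivity,'' and Anosov diffeomorphisms supply the shadowing for free. The step requiring the most care is the bookkeeping that guarantees the shadowing orbit both \emph{starts} in $U$ and \emph{ends} in $V$, rather than merely passing near them; this is exactly what the $2\delta$ buffer secures. A secondary technical point is the validity of shadowing for finite pseudo-orbits, which is standard for Anosov systems, together with the reconciliation of the set-valued notion of transitivity with the dense-orbit formulation used for the diffeomorphism $f$; neither of these presents a genuine difficulty.
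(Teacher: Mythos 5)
Your proof is correct, and it rests on the same two pillars as the paper's: the identification of trajectories of $f_\varepsilon$ with $\varepsilon$-pseudo-orbits of $f$, and the Shadowing Lemma for Anosov diffeomorphisms. The deployment, however, is genuinely different. You argue pair-by-pair: for each pair of open sets $U,V$ you extract a single \emph{finite} $\varepsilon$-pseudo-orbit running from $B_\delta(p)$ to $B_\delta(q)$ and shadow it, with the $2\delta$ buffer absorbing the shadowing error at both endpoints — this is the textbook ``chain transitivity plus shadowing implies transitivity'' implication, and it only ever needs shadowing of finite pseudo-orbits. The paper instead first builds, for each $\delta$, a single \emph{dense infinite} $\delta$-orbit (running through a countable basis $\{U_n\}$ of $X$), shadows that whole orbit by a true orbit of a point $x_\varepsilon$, and then reads off transitivity from the fact that this one orbit visits every $\varepsilon$-ball. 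Your route is the more economical one for the stated conclusion; the paper's route yields the slightly stronger intermediate fact that for every $\varepsilon>0$ there is a genuine orbit of $f$ that is $\varepsilon$-dense in $X$. One small point of bookkeeping common to both arguments: the set-valued definition of transitivity should be read as producing an iterate $n\ge 1$ (as the paper does implicitly in Proposition~\ref{prop.trans.pt}), so that the resulting $N$ in $f^N(U)\cap V\neq\emptyset$ is positive; your argument accommodates this without change.
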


Several authors have investigated the dynamical properties of set-valued maps.  
The definition of transitivity used in Theorem~\hyperlink{teo.A}{A} follows that introduced in \cite{luo2020shadowing}.  
Other related studies include \cite{cordeiro2016continuum, duarte2006combinatorial, metzger2017topological, pilyugin2008shadowingcontractive, pilyugin2008shadowinghiperbolic}.  
In particular, \cite{metzger2017topological} addresses topological stability for set-valued maps using techniques such as the shadowing property and inverse limit constructions, while \cite{pilyugin2008shadowingcontractive, pilyugin2008shadowinghiperbolic} studies shadowing from a different viewpoint.  
It is worth emphasizing that, among all these contributions, only \cite{duarte2006combinatorial, duarte2015stability} specifically deals with open set-valued maps.

Another question we consider in this work concerns the structure of the limit sets of open set-valued maps, in analogy with Smale’s celebrated spectral decomposition theorem \cite{smale1967differentiable}.  
In its classical formulation, the theorem asserts that for an Axiom~A diffeomorphism $f$, the nonwandering set can be decomposed into finitely many invariant components, each of which is transitive.  
Later,  R. Bowen \cite{bowen1971periodic} refined this description by showing that each transitive component can itself be decomposed into finitely many subsets on which $f$ acts cyclically and in a mixing fashion.  
In a similar spirit, \cite{duarte2006combinatorial} introduced the notion of the set of \emph{final recurrent points} $\Omega_{\mathrm{final}}(f)$ for an open set-valued map and analyzed the combinatorial structure of its equivalence classes.  
We further develop this structure by studying the dynamics of $f$ restricted to each class, leading to our second main result.

\begin{thmB}\hypertarget{teo.B}{}
Let $f : X \to 2^X$ be an open set-valued map. 
Then $\Omega_{\mathrm{final}}(f)$ can be decomposed into a finite collection of pairwise disjoint open sets 
\[
\Lambda_1, \Lambda_2, \dots, \Lambda_n,
\]
and each $\Lambda_j$ can be further decomposed into finitely many pairwise disjoint, open, and connected sets
\[
\Sigma_1^j, \Sigma_2^j, \dots, \Sigma_{n_j}^j.
\]
This decomposition satisfies:
\begin{enumerate}
    \item $f(\Lambda_j) = \Lambda_j$ for each $j = 1, \dots, n$;
    \item $f(\Sigma_k^j) = \Sigma_{k+1}^j$ for $k = 1, \dots, n_j - 1$, and $f(\Sigma_{n_j}^j) = \Sigma_1^j$;
    \item the restriction $\restric{f}{\Lambda_j}$ is transitive for every $j = 1, \dots, n$;
    \item the restriction $\restric{f^{n_j}}{\Sigma_k^j}$ is mixing for every $j = 1, \dots, n$ and $k = 1, \dots, n_j$.
\end{enumerate}
\end{thmB}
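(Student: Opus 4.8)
The plan is to establish the two layers of the decomposition in turn: the coarse decomposition into the $\Lambda_j$ is read off from the combinatorial structure of the final recurrent set due to Duarte--Torres, while the finer cyclic refinement into the $\Sigma_k^j$ together with the mixing property is proved in the spirit of Bowen's refinement of Smale's theorem. First I would recall that $\Omega_{\mathrm{final}}(f)$ is partitioned into the finitely many equivalence classes $\Lambda_1,\dots,\Lambda_n$ of the mutual--reachability relation, where $x\sim y$ when there are finite orbit segments of $f$ joining $x$ to $y$ and $y$ to $x$; these classes are open and satisfy $f(\Lambda_j)=\Lambda_j$, which is exactly item~(1). Transitivity in item~(3) is then immediate from the way the classes are built: given nonempty open sets $U,V\subseteq\Lambda_j$, pick $x\in U$ and $y\in V$; mutual reachability yields $k\ge 1$ with $y\in f^{k}(x)\subseteq f^{k}(U)$, so that $f^{k}(U)\cap V\neq\varnothing$.

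For the cyclic refinement I would take the $\Sigma_k^j$ to be the connected components of $\Lambda_j$. The key structural input is that $f$ sends connected sets to connected sets: since $\graf(f)$ is open, $f$ is lower semicontinuous, and combined with the connectedness of each $f(x)$ a standard separation argument shows that if $C$ is connected then so is $f(C)$. Consequently $f$ maps each component of $\Lambda_j$ into a single component; using $f(\Lambda_j)=\Lambda_j$ together with the finiteness of the set of components (which I would extract from the combinatorial finiteness established by Duarte--Torres), the induced map on components is a bijection, hence a permutation, and transitivity forces this permutation to be a single cycle. Relabelling the components along this cycle as $\Sigma_1^j\to\Sigma_2^j\to\cdots\to\Sigma_{n_j}^j\to\Sigma_1^j$ gives item~(2).

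It remains to prove mixing, item~(4). Fixing $j$, for $x\in\Lambda_j$ set $R(x)=\{m\ge 1 : x\in f^{m}(x)\}$. This set is nonempty because $x$ is recurrent, and it is closed under addition: from $x\in f^{m_1}(x)$ one gets $f^{m_2}(x)\subseteq f^{m_2}(f^{m_1}(x))=f^{m_1+m_2}(x)$, whence $x\in f^{m_2}(x)$ yields $x\in f^{m_1+m_2}(x)$. Since a point can only return to its own component after advancing a multiple of $n_j$ steps along the cycle, $R(x)\subseteq n_j\Z$. I would then show $\gcd R(x)=n_j$: if the gcd were $n_j t$ with $t>1$, the residues modulo $t$ of the reaching--times under $f^{n_j}$ from a fixed basepoint would define a coloring of $\Sigma_1^j$ into $t$ pairwise disjoint, nonempty, open color classes cyclically permuted by $f^{n_j}$, contradicting the connectedness of $\Sigma_1^j$. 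Hence, writing $g=f^{n_j}$, every point of $\Sigma_k^j$ has $g$--return-time set of gcd $1$, so by the numerical-semigroup (Chicken McNugget) fact this set is cofinite. Finally, given nonempty open $U,V\subseteq\Sigma_k^j$, transitivity provides $a$ and a point $v\in V\cap g^{a}(U)$; cofiniteness of the return times of $v$ gives $v\in g^{\ell}(v)$ for all large $\ell$, and concatenation yields $v\in g^{\ell}(g^{a}(U))=g^{a+\ell}(U)$, so $g^{m}(U)\cap V\neq\varnothing$ for all sufficiently large $m$. This is exactly mixing of $\restric{f^{n_j}}{\Sigma_k^j}$.

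The hard part will be the passage from transitivity to mixing, that is, proving that the arithmetic period $\gcd R(x)$ coincides with the combinatorial period $n_j$ and then running the additive-semigroup argument at the level of open sets rather than orbits of a single-valued map; the set-valued nature means one must propagate open sets through the iterates $f^{m}$ with care, using the open-graph hypothesis to keep the relevant intersections nonempty. A secondary difficulty is the connectedness and finiteness of the components of $\Lambda_j$ and the verification that $f$ genuinely permutes them, rather than merely mapping each into one, which is precisely where the lower semicontinuity coming from the openness of $\graf(f)$ and the invariance $f(\Lambda_j)=\Lambda_j$ are essential.
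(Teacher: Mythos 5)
Your proof is correct, and for items (1)--(3) it follows the paper's own route: both read the finiteness of the classes, the invariance $f(\Lambda_j)=\Lambda_j$, and the cyclic permutation of the connected components off the Duarte--Torres structure theorem (Theorem~\ref{teo.duarte}), and both get transitivity of $\restric{f}{\Lambda_j}$ directly from mutual reachability inside a final class (Proposition~\ref{restritoclassetransitivo}). Where you genuinely diverge is item (4). The paper reduces mixing of $\restric{f^{n_j}}{\Sigma_k^j}$ to a standalone statement (Theorems~\ref{transitivomixing} and~\ref{teo.mixing2}): a transitive open set-valued map on a compact \emph{connected} space is mixing. Its proof produces, for an open $U$ with $\overline U\subset f(X)$, an $n$ with $U\subset f^n(U)$ by taking an lcm of local return times over a finite subcover (Proposition~\ref{proposicao12}), and then uses connectedness of $f(X)$ to force the sets $V_j=\bigcup_k f^{kn}(f^j(U))$ to pairwise intersect and hence all equal $f(X)$; this requires extending the restriction to $\overline{\Sigma_k^j}$ to regain compactness. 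You instead analyze the return-time semigroup $R(x)$ of a single recurrent point: additivity is immediate, $R(x)\subset n_j\Z$ comes from the cycle, the gcd is pinned to $n_j$ by a coloring/disconnection argument on the connected component, and cofiniteness of a gcd-one numerical semigroup plus one application of transitivity gives mixing. Both arguments hinge on connectedness --- the paper uses connectedness of $f(X)$ (equivalently of $\overline C$), you use connectedness of $\Sigma_1^j$ itself --- and both work; yours avoids the passage to the closure and the lcm bookkeeping and yields the sharper fact that a recurrent point returns at every sufficiently large multiple of $n_j$, while the paper's yields a transitive-implies-mixing theorem of independent interest. The one step of your sketch that must be written out is the pairwise disjointness of the colour classes: if $y\in g^a(x_0)\cap g^b(x_0)$ with $g=f^{n_j}$, one sends $y$ back to $x_0$ in some $c$ steps of $g$ (possible since both lie in the same final class and the step count is forced to be a multiple of $n_j$), so $a+c,\,b+c\in R(x_0)/n_j$ are both divisible by $t$ and hence $a\equiv b \pmod t$; this closes the gap and the argument goes through.
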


A further topic of interest concerns the topological entropy of open set-valued maps.  
The definition we adopt here is completely analogous to the classical one for single-valued maps. 
This definition was used in \cite{cordeiro2016continuum}, where the authors studied the entropy of continuum-wise expansive set-valued maps and those with the pointwise specification property.  
In our context, since an open set-valued map $f$ contains some $F_\varepsilon$ associated with $F : X \to X$, i.e., $f$ encapsulates the dynamics of all maps sufficiently close to $F$, it is natural to expect that $f$ exhibits infinite entropy.  
This is indeed the case, as shown by our third main result.

\begin{thmC}\hypertarget{teo.C}{}
Let $X$ be a compact metric space without isolated points, and let $f : X \to 2^X$ be an open set-valued map. 
Then the topological entropy of $f$ is infinite.
\end{thmC}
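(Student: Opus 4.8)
The plan is to reduce the statement to the construction of a single nonempty open set $V \subseteq X$ together with an integer $m \ge 1$ enjoying the self-covering property
\[
V \subseteq f^m(y) \qquad \text{for every } y \in V ,
\]
where $f^m(y)$ denotes the set of points reachable from $y$ in exactly $m$ steps. Once such a pair $(V,m)$ is available, the absence of isolated points makes $V$ infinite, so for every $\epsilon>0$ the set $V$ contains a maximal $\epsilon$-separated subset whose cardinality $N(\epsilon)$ tends to infinity as $\epsilon \to 0$. Choosing $N$ points $p_1,\dots,p_N \in V$ that are pairwise $\epsilon$-apart, every symbol sequence $s \in \{1,\dots,N\}^{\Nat}$ yields a genuine orbit of $f^m$ via $p_{s_t} \mapsto p_{s_{t+1}} \in V \subseteq f^m(p_{s_t})$, and interpolating the $m$ intermediate points turns it into an orbit of $f$. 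Two sequences differing in one of their first $n$ symbols produce orbit segments of length $nm$ that are $(nm,\epsilon)$-separated, so the number of such segments is at least $N^n$; hence $\frac{1}{nm}\log N^n = \frac{\log N}{m}$ is, in the limit, a lower bound for the growth rate at scale $\epsilon$. Letting $\epsilon \to 0$ (so that $N=N(\epsilon)\to\infty$) while $m$ stays fixed forces $\htop(f)=\infty$.

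It remains to produce $(V,m)$, and here is where openness of the graph and compactness enter. The key preliminary step is to extract a uniform product scale: for $x \in X$ set
\[
\phi(x) = \sup\bigl\{\, r \le 1 : \exists\, y \text{ with } B_r(x)\times B_r(y)\subseteq \graf(f)\,\bigr\}.
\]
Since $f(x)$ is open and nonempty and $\graf(f)$ is open, $\phi(x)>0$ for every $x$, and a short argument shows that $\phi$ is lower semicontinuous; a lower semicontinuous function on the compact space $X$ attains its infimum, which is therefore a positive number $\rho_0$. Fixing $\rho=\rho_0/2$, I can attach to each $x$ a point $y(x)$ with $B_\rho(x)\times B_\rho(y(x))\subseteq\graf(f)$, which means $B_\rho(y(x))\subseteq f(x')$ for every $x'\in B_\rho(x)$. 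I then build the orbit $x_{k+1}=y(x_k)$ and invoke compactness: covering $X$ by finitely many balls of radius $\rho/2$, two iterates $x_i,x_j$ with $i<j$ fall in the same ball, so $d(x_i,x_j)<\rho$. Writing $m=j-i$ and chaining the inclusions along the segment gives $f^m(x')\supseteq B_\rho(x_j)$ for all $x'\in B_\rho(x_i)$; taking $V=B_\rho(x_i)\cap B_\rho(x_j)$, which contains $x_i$ and is therefore a nonempty open set, yields exactly $V\subseteq f^m(y)$ for all $y\in V$.

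I expect the main obstacle to be the uniformity in the previous step: a priori the product boxes witnessing $b\in f(x)$ could shrink to zero along the orbit $(x_k)$, which would make the pigeonhole return useless, since the returning ball would be too small to reconnect through the open graph. The lower semicontinuity of $\phi$ together with compactness of $X$ is precisely what rules this out and delivers a single scale $\rho$ valid at every point; the self-covering lemma and the full-shift count that follow are then routine. It is also worth stressing where the hypotheses are spent: compactness supplies both the uniform scale and the recurrent pair $x_i,x_j$, while the absence of isolated points is what guarantees that $V$ carries arbitrarily many separated points, and hence an $N$-symbol full shift for every $N$.
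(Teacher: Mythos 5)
Your proof is correct, but it takes a genuinely different route from the paper's. The paper's argument is ``uniform branching at every step'': using the uniform product scale of Proposition~\ref{prop.grafico.epsilon} together with two metric lemmas (Lemmas~\ref{lem.1} and~\ref{lem.2}), it shows in Proposition~\ref{prop:preparacao.entropia} that for each $m$ there is a single scale $\varepsilon_0(m)>0$ such that \emph{every} value $f(x)$ contains $m$ points pairwise $\varepsilon_0(m)$-separated; iterating this choice gives $m^n$ orbit segments that are $(n,\varepsilon_0(m))$-separated, hence $\htop(f)\ge\log m$ for every $m$. You instead localize the branching in space and spread it out in time: your function $\phi$ (which is in fact $1$-Lipschitz, so it attains a positive minimum on the compact $X$ --- this plays exactly the role of Proposition~\ref{prop.grafico.epsilon}) yields a uniform box scale $\rho$, the pigeonhole principle produces a recurrent box, and chaining the product inclusions (the same induction as in Proposition~\ref{prop.abertos}) gives one open set $V$ with $V\subseteq f^m(y)$ for all $y\in V$; the absence of isolated points then embeds a full shift on $N(\epsilon)$ symbols with time step $m$, and $N(\epsilon)\to\infty$ at fixed $m$ forces $\htop(f)=\infty$. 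The trade-off is instructive: in the paper the number of symbols $m$ is tied to a degenerating separation scale $\varepsilon_0(m)$, while in your scheme the symbol count is decoupled from the fixed time rescaling $m$, at the cost of only bounding $s_n$ along the subsequence $n\in m\mathbb{N}$ (harmless, since a $\limsup$ along a subsequence is a lower bound for the full $\limsup$). A concrete advantage of your route: it never uses connectedness of the values $f(x)$, which the paper needs in Lemma~\ref{lem.1} and must work around in the remark following its proof of the theorem to cover maps with open but disconnected values; your argument handles that more general setting as written. All the steps you flag as delicate do go through, so there is no gap.
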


\smallskip
The paper is organized as follows.  
Section~\ref{preliminares} contains the basic definitions and preliminary results concerning open set-valued maps.  
In Section~\ref{espectral}, we discuss the relationship between mixing and transitive open set-valued maps and establish Theorem~\hyperlink{teo.B}{B}.  
Section~\ref{entropia} is devoted to the proof of Theorem~\hyperlink{teo.C}{C}, while Section~\ref{anosov} contains the proof of Theorem~\hyperlink{teo.A}{A}.

\section{Preliminary} \label{preliminares}

In this section, we present several fundamental definitions and preliminary results that will be essential for the development of the paper. Some of these results were originally established in \cite{duarte2006combinatorial}, to which we refer for further details. We also discuss the notion of transitivity, a concept that plays a central role in the present work and was not treated in \cite{duarte2006combinatorial}.

Let $X$ be a compact metric space, and let $2^X$ denote the collection of all nonempty subsets of $X$. A set-valued map $f:X\to 2^X$ is an application such that for each $x \in X$, $f(x)$ is a subset of $X$. The graph of this application is  the set 
$$\mbox{graph}\, (f)=\{(x,f(x): x\in X\} \subset X \times X.$$
\begin{definition} 
A set-valued map $f : X \to 2^X$ is said to be \emph{open} if its graph is an open subset of $X \times X$, and each value $f(x) \subset X$ is nonempty, open and connected. 
\end{definition}

A natural example of an open set-valued map arises from a continuous function by associating to each point a small open neighborhood of its image.  Specifically, let $g:X \to X$ be a map.  Given $\epsilon>0$, we denote by $g_\varepsilon$ the  set-valued map $g_\epsilon: X \to 2^X$ defined by
\[
g_\epsilon(x) := B_\epsilon(g(x)),
\]
where $B_\epsilon(g(x))$ denotes the open ball of radius $\epsilon$ centered at $g(x)$. If $g$ is continuous and $X$ is connected, then $g_\epsilon$ is an open set-valued map.

\begin{proposition}
    Let $f: X \to 2^X$ be an open set-valued map. If $y \in f^n(x)$ for some integer $n \geq 1$, then there exists $\delta  > 0$ such that $B_\delta(y) \subset f^n(\tilde x)$ for every $\tilde x \in B_\delta(x)$.
    \label{prop.abertos}
\end{proposition}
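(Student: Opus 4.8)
The plan is to argue by induction on $n$, exploiting that the membership $y \in f^n(x)$ unfolds into a finite orbit chain $x = x_0, x_1, \dots, x_n = y$ with $x_{i+1} \in f(x_i)$, and that openness can be propagated along it. I will use throughout the identity $f^{n+1}(x) = \bigcup_{z \in f^n(x)} f(z)$, so that $y \in f^{n+1}(x)$ means precisely that there is some $z \in f^n(x)$ with $y \in f(z)$.

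For the base case $n = 1$ I would note that $y \in f(x)$ says $(x,y) \in \graf(f)$, and since the graph is open I can fit a basic product neighborhood $B_\rho(x) \times B_\rho(y) \subset \graf(f)$ for some $\rho > 0$. Unwinding this inclusion gives exactly $B_\rho(y) \subset f(\tilde x)$ for every $\tilde x \in B_\rho(x)$, so $\delta = \rho$ works. Note that the full force of openness of the graph (as opposed to openness of the single value $f(x)$) is what lets the conclusion hold for all nearby $\tilde x$, not just for $x$ itself.

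For the inductive step I would assume the claim for $n$ and take $y \in f^{n+1}(x)$, fixing $z \in f^n(x)$ with $y \in f(z)$. Applying the inductive hypothesis to the pair $(x,z)$ produces $\delta_2 > 0$ with $B_{\delta_2}(z) \subset f^n(\tilde x)$, in particular $z \in f^n(\tilde x)$, for every $\tilde x \in B_{\delta_2}(x)$; while openness of the value $f(z)$, together with $y \in f(z)$, yields $\delta_1 > 0$ with $B_{\delta_1}(y) \subset f(z)$. Setting $\delta = \min(\delta_1,\delta_2)$, every $\tilde x \in B_\delta(x)$ satisfies $z \in f^n(\tilde x)$, hence $f(z) \subset f^{n+1}(\tilde x)$ by the union identity, and therefore $B_\delta(y) \subset f(z) \subset f^{n+1}(\tilde x)$, as required.

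I expect the only real care to lie in the bookkeeping of the two radii: the range ball around $y$ is generated at the final application of $f$ (from openness of the graph in the base case, and of the single value $f(z)$ in the inductive step), whereas the domain ball around $x$ is inherited from the inductive hypothesis; intersecting forces $\delta = \min(\delta_1,\delta_2)$ so that the membership $z \in f^n(\tilde x)$ and the inclusion $B_\delta(y) \subset f(z)$ hold simultaneously. There is no substantive obstruction here: connectedness of the values is not needed, and the finite length of the orbit chain keeps the induction entirely elementary.
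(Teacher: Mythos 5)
Your proof is correct and follows essentially the same route as the paper: the base case extracts a product neighborhood $A \times B \subset \operatorname{graph}(f)$ from openness of the graph, and the general case is handled by induction along the orbit chain. The only difference is that you spell out the inductive step (which the paper dismisses with ``by induction, the same property holds''), and your version of that step, using openness of the value $f(z)$ together with $z \in f^n(\tilde{x})$, is a clean and valid way to carry it out.
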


\begin{proof}
    Let $x \in X$ and $y \in f(x)$. Since $\graf (f)$ is open, there exist open sets $A \ni x$ and $B \ni y$ in $X$ such that  $A \times B \subset \graf(f)$.  This implies that $B \subset f(a)$ for every $a \in A$. By induction, the same property holds when $x \in X$ and $y \in f^n(x)$. Since $A$ and $B$ are open, there exists $\delta >0$ such that $B_\delta(x) \subset A$ and $B_\delta(y) \subset B$. This concludes the proof.
\end{proof}

\begin{proposition}\label{prop.grafico.epsilon}
Let $f : X \to 2^X$ be an open set-valued map.  
Then there exists a map $F : X \to X$ and a constant $\varepsilon_0 > 0$ such that 
\[
B_{\varepsilon_0}(\operatorname{graph}(F)) \subset \operatorname{graph}(f).
\]
\end{proposition}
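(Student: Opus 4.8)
The plan is to construct $F$ as a suitably chosen (not necessarily continuous) selection of $f$ and to extract a uniform thickening radius from compactness. I would begin pointwise: for each $x \in X$ the value $f(x)$ is nonempty, so I may pick some $y_x \in f(x)$. Applying Proposition~\ref{prop.abertos} with $n = 1$ to the pair $(x, y_x)$ produces a radius $\delta_x > 0$ such that $B_{\delta_x}(y_x) \subset f(a)$ for every $a \in B_{\delta_x}(x)$. In product-metric terms (I use the max metric on $X \times X$, so that $B_r((x,y)) = B_r(x) \times B_r(y)$), this says precisely that $B_{\delta_x}(x) \times B_{\delta_x}(y_x) \subset \operatorname{graph}(f)$.

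The difficulty is that the radii $\delta_x$ may degenerate to zero as $x$ varies, so a single pointwise selection gives no uniform $\varepsilon_0$. I would resolve this using compactness. The balls $\{B_{\delta_x/2}(x)\}_{x \in X}$ form an open cover of $X$; by compactness I extract a finite subcover $B_{\delta_{x_1}/2}(x_1), \dots, B_{\delta_{x_N}/2}(x_N)$ and set $\varepsilon_0 := \min_{1 \le i \le N} \delta_{x_i}/2 > 0$, the minimum over a finite set being strictly positive.

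Next I would define the selection $F$. For each $z \in X$, let $i(z)$ be the least index with $z \in B_{\delta_{x_{i(z)}}/2}(x_{i(z)})$ and put $F(z) := y_{x_{i(z)}}$. Since $z \in B_{\delta_{x_{i(z)}}}(x_{i(z)})$, the defining property of $\delta_{x_{i(z)}}$ (taking $a = z$) gives $F(z) \in B_{\delta_{x_{i(z)}}}(y_{x_{i(z)}}) \subset f(z)$, so $F : X \to X$ is a genuine map with $\operatorname{graph}(F) \subset \operatorname{graph}(f)$.

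Finally I would verify the uniform inclusion. Fix $z$, write $i = i(z)$, and take any $(\tilde{x}, \tilde{y}) \in B_{\varepsilon_0}((z, F(z)))$. Then $d(\tilde{x}, x_i) \le d(\tilde{x}, z) + d(z, x_i) < \varepsilon_0 + \delta_{x_i}/2 \le \delta_{x_i}$ and $d(\tilde{y}, y_{x_i}) = d(\tilde{y}, F(z)) < \varepsilon_0 \le \delta_{x_i}/2 < \delta_{x_i}$, so $\tilde{x} \in B_{\delta_{x_i}}(x_i)$ and $\tilde{y} \in B_{\delta_{x_i}}(y_{x_i})$. The defining property of $\delta_{x_i}$ then yields $\tilde{y} \in f(\tilde{x})$, i.e. $(\tilde{x}, \tilde{y}) \in \operatorname{graph}(f)$. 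Since $z$ was arbitrary and $B_{\varepsilon_0}(\operatorname{graph}(F)) = \bigcup_{z \in X} B_{\varepsilon_0}((z, F(z)))$, this gives $B_{\varepsilon_0}(\operatorname{graph}(F)) \subset \operatorname{graph}(f)$, as required. The only genuine obstacle is the passage from pointwise radii to a uniform one, which the halving-plus-finite-subcover device handles cleanly.
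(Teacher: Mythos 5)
Your proof is correct and follows essentially the same route as the paper: a pointwise selection $y_x$ with radii $\delta_x$ from Proposition~\ref{prop.abertos}, a finite subcover of $X$, and $\varepsilon_0$ taken as a minimum over finitely many radii. Your halving of the radii before extracting the subcover is in fact slightly more careful than the paper's own argument, which covers $X$ by the full balls $B_{\delta_x}(x)$ and asserts the inclusion ``by construction''; your version is the one for which the final triangle-inequality verification actually goes through.
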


\begin{proof}
For each $x \in X$, by Proposition~\ref{prop.abertos}, there exists a point $y_x \in f(x)$ and a number $\delta_x > 0$ such that 
$B_{\delta_x}(y_x) \subset f(y)$ for every $y \in B_{\delta_x}(x)$.  
The family $\{ B_{\delta_x}(x) \}_{x \in X}$ forms an open cover of $X$.  
By compactness, there exists a finite subcover $\{ B_{\delta_x}(x) \}_{x \in I}$.  
Define  $\varepsilon_0 = \min\{\, \delta_x : x \in I \,\}$, and construct a map $F : X \to X$ as follows: for each $z \in X$, choose an index $x \in I$ such that $z \in B_{\delta_x}(x)$ and set $F(z) = y_x$.  
Then, by construction,
\[
B_{\varepsilon_0}(\operatorname{graph}(F)) \subset \operatorname{graph}(f),
\]
as required.
\end{proof}

\begin{proposition}\label{prop.imagem.conexa}
Let $X$ be connected, and let $f : X \to 2^X$ be an open set-valued map. 
Then $f(X)$ is connected.
\end{proposition}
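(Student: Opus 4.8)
The plan is to argue by contradiction, exploiting the fact that each individual image $f(x)$ is connected in order to force a clopen partition of $X$ itself. First I would observe that $f(X) = \bigcup_{x \in X} f(x)$ is open in $X$, being a union of the open sets $f(x)$. Suppose, toward a contradiction, that $f(X)$ is not connected; then we may write $f(X) = U \cup V$, where $U$ and $V$ are nonempty, disjoint, and open in $X$ (openness in $X$ is automatic, since $f(X)$ is open and $U, V$ are open in the subspace $f(X)$).

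Since $f(x)$ is connected and contained in the disjoint union $U \cup V$ for every $x \in X$, each $f(x)$ must lie entirely inside $U$ or entirely inside $V$. This yields a partition $X = X_U \cup X_V$, where $X_U = \{\, x \in X : f(x) \subset U \,\}$ and $X_V = \{\, x \in X : f(x) \subset V \,\}$; the two sets are disjoint because $f(x)$ is nonempty and therefore cannot be contained in both $U$ and $V$.

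The crux of the argument---and the step I expect to require the most care---is showing that $X_U$ and $X_V$ are both open. Fix $x \in X_U$ and choose any $y \in f(x)$, which is possible since $f(x) \neq \emptyset$; then $y \in U$. By Proposition~\ref{prop.abertos} applied with $n = 1$, there exists $\delta > 0$ such that $y \in B_\delta(y) \subset f(\tilde x)$ for every $\tilde x \in B_\delta(x)$. In particular, $f(\tilde x)$ contains $y$ and hence meets $U$. Because $f(\tilde x)$ is connected and sits inside $U \cup V$, it cannot intersect both $U$ and $V$, so it is contained in $U$; that is, $\tilde x \in X_U$. Thus $B_\delta(x) \subset X_U$, proving that $X_U$ is open, and the same reasoning shows $X_V$ is open.

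Finally, I would close the argument by checking that both pieces are nonempty: any point $u \in U \subset f(X)$ lies in some $f(x)$, and since that $f(x)$ meets $U$ and is connected, $f(x) \subset U$, forcing $x \in X_U$; symmetrically $X_V \neq \emptyset$. We thus exhibit $X$ as a disjoint union of two nonempty open sets $X_U$ and $X_V$, contradicting the connectedness of $X$. Therefore $f(X)$ must be connected. The only nonformal ingredient here is the interplay between the local openness supplied by Proposition~\ref{prop.abertos} and the connectedness of each fiber $f(\tilde x)$; once those are combined, everything else is a standard separation argument.
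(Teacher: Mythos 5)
Your proof is correct and follows essentially the same route as the paper's: a contradiction argument that pulls a putative separation of $f(X)$ back to a separation of $X$ via the sets $\{x : f(x)\subset U\}$ and $\{x : f(x)\subset V\}$, using Proposition~\ref{prop.abertos} together with connectedness of each $f(x)$ to show these sets are open. Your explicit verification that both pieces are nonempty is a small point the paper leaves implicit, but the argument is the same.
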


\begin{proof}
Suppose, by contradiction, that $f(X)$ is not connected. 
Then there exist disjoint nonempty open sets $A, B \subset X$ such that 
\[
A \cup B \supset f(X), 
\quad 
A \cap f(X) \neq \emptyset, 
\quad \text{and} \quad 
B \cap f(X) \neq \emptyset.
\]
Define
\[
\tilde{A} = \{\, x \in X : f(x) \subset A \,\}
\quad \text{and} \quad
\tilde{B} = \{\, x \in X : f(x) \subset B \,\}.
\]
We claim that for every $x \in X$, either $x \in \tilde{A}$ or $x \in \tilde{B}$. 
Indeed, if there exists $x \in X$ such that $f(x)$ meets both $A$ and $B$, 
then $A \cup B \supset f(x)$ with 
$A \cap f(x) \neq \emptyset$ and $B \cap f(x) \neq \emptyset$, 
contradicting the connectedness of $f(x)$. 
Hence $X = \tilde{A} \cup \tilde{B}$.

We next show that $\tilde{A}$ and $\tilde{B}$ are open. 
Let $a \in \tilde{A}$. 
Since $f(a) \subset A$ and $f(a)$ is open, 
Proposition~\ref{prop.abertos} ensures the existence of $\delta > 0$ such that 
$f(y) \cap f(a) \neq \emptyset$ for all $y \in B_\delta(a)$. 
Because $f(a) \subset A$, this implies $f(y) \subset A$ for all $y \in B_\delta(a)$, 
and thus $B_\delta(a) \subset \tilde{A}$. 
Therefore $\tilde{A}$ is open, and the same reasoning applies to $\tilde{B}$. 

We have found disjoint nonempty open sets $\tilde{A}, \tilde{B} \subset X$ such that 
$X = \tilde{A} \cup \tilde{B}$, 
contradicting the connectedness of $X$. 
Hence $f(X)$ must be connected.
\end{proof}

A trajectory (or orbit) of a set-valued map $f$ is a sequence $(x_n)_{n \in \mathbb{I}} \subset X$ satisfying
\[
x_{n+1} \in f(x_n) \quad \text{for all } n \in \mathbb{I}
\]
where $\mathbb I$  is some interval of $\mathbb N$. If the sequence $(x_n)_n$ has a finite length, we call the trajectory finite.

Given $x,y \in X$, we write $x \rightsquigarrow y$ if there exists a finite trajectory starting at $x$ and ending at $y$. More precisely, $x \rightsquigarrow y$ if there exists a finite sequence $(x_n)_n \subset X$ such that
\[
x_0 = x, \quad x_n = y, \quad \text{and} \quad x_{k+1} \in f(x_k) \text{ for all } k = 0,1,\dots,n-1.
\]

We now define two limit sets associated with a set-valued map:

\begin{definition}
Let $f: X \to 2^X$ be an open set-valued map. The \emph{recurrent set} and the \emph{final recurrent set} are, respectively,
$$\Omega(f)=\{x\in X: x \rightsquigarrow x\} \qquad \text{and} \qquad \Omega_{\mathrm{final}}(f)=\{x\in X: \mbox{ if } x \rightsquigarrow y, \mbox{ then } y\rightsquigarrow x\}.$$
\end{definition}

\begin{remark}
    For any open set-valued map $f$, one always has $\Omega_{\mathrm{final}}(f) \subset \Omega(f)$. Figure~\ref{omegasdiferentes} illustrates the graph of an open set-valued map on $X = [0,1]$ for which $\Omega_{\mathrm{final}}(f) \neq \Omega(f)$. In this example, the points $x$ and $y$ belong to $\Omega(f)$; however, $y \notin \Omega_{\mathrm{final}}(f)$ since $y \rightsquigarrow x$, but $x \not\rightsquigarrow y$.
\end{remark}

\begin{figure}[h]
    \centering
    \includegraphics[scale=1]{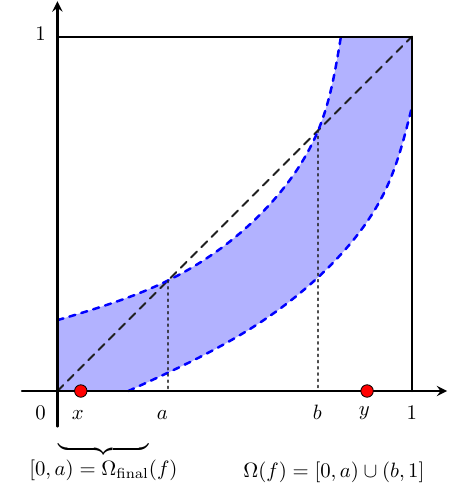}
    \caption{The graph of a open set-valued map such that $\Omega(f) \not= \Omega_{\textrm{final}}$.}
    \label{omegasdiferentes}
\end{figure}

The relation $\rightsquigarrow$ is an equivalence relation on $\Omega_{\mathrm{final}}(f)$, but it is not symmetric on $\Omega(f)$. To obtain an equivalence relation on $\Omega(f)$, we instead consider the relation $\leftrightsquigarrow$, defined by
\[
x \leftrightsquigarrow y \quad \text{if and only if} \quad x \rightsquigarrow y \ \text{and} \ y \rightsquigarrow x.
\]
We denote the corresponding equivalence classes by
\[
[x]_{\Omega} = \{\, y \in \Omega(f) : x \rightsquigarrow y \ \text{and} \ y \rightsquigarrow x \,\}
\quad \text{and} \quad
[x]_{\Omega_{\mathrm{final}}} = \{\, y \in \Omega_{\mathrm{final}}(f) : x \rightsquigarrow y \,\}.
\]

\begin{remark}
We refer to each $[x]_{\Omega_{\mathrm{final}}}$ as a \emph{final class}. There are only finitely many such classes, and each of them is an invariant set (see Theorem~\ref{teo.duarte}).
\end{remark}

\begin{proposition}
Let $f$ be an open set-valued map. Then both $[x]_\Omega$ and $\Omega(f)$ are open sets.
\label{omegaaberto}
\end{proposition}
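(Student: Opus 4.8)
The plan is to reduce everything to the local openness encoded in Proposition~\ref{prop.abertos}, together with the observation that a finite trajectory from $a$ to $b$ is exactly the statement that $b \in f^n(a)$ for some $n \geq 1$, and that $\rightsquigarrow$ is transitive (one concatenates trajectories).

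First I would prove that $\Omega(f)$ is open. Take $x \in \Omega(f)$, so $x \rightsquigarrow x$; this means $x \in f^n(x)$ for some $n \geq 1$. Applying Proposition~\ref{prop.abertos} with $y = x$ yields a $\delta > 0$ such that $B_\delta(x) \subset f^n(\tilde x)$ for every $\tilde x \in B_\delta(x)$. In particular, for each such $\tilde x$ we have $\tilde x \in B_\delta(x) \subset f^n(\tilde x)$, hence $\tilde x \rightsquigarrow \tilde x$ and $\tilde x \in \Omega(f)$. Thus $B_\delta(x) \subset \Omega(f)$, and $\Omega(f)$ is open.

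Next I would prove that $[x]_\Omega$ is open, handling the two reachability conditions separately and then intersecting the resulting balls. Let $y \in [x]_\Omega$, so $y \in \Omega(f)$, $x \rightsquigarrow y$ and $y \rightsquigarrow x$. From $x \rightsquigarrow y$ we get $y \in f^m(x)$ for some $m$; Proposition~\ref{prop.abertos} gives $\delta_1 > 0$ with $B_{\delta_1}(y) \subset f^m(\tilde x)$ for every $\tilde x \in B_{\delta_1}(x)$, and taking $\tilde x = x$ shows $x \rightsquigarrow \tilde y$ for every $\tilde y \in B_{\delta_1}(y)$. From $y \rightsquigarrow x$ we get $x \in f^p(y)$ for some $p$; applying the proposition in the other direction (with base point $y$ and target $x$) gives $\delta_2 > 0$ such that $B_{\delta_2}(x) \subset f^p(\tilde y)$ for every $\tilde y \in B_{\delta_2}(y)$, whence $x \in f^p(\tilde y)$ and $\tilde y \rightsquigarrow x$. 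Setting $\delta = \min\{\delta_1, \delta_2\}$, every $\tilde y \in B_\delta(y)$ satisfies both $x \rightsquigarrow \tilde y$ and $\tilde y \rightsquigarrow x$; concatenating these trajectories gives $\tilde y \rightsquigarrow \tilde y$, so $\tilde y \in \Omega(f)$, and therefore $\tilde y \in [x]_\Omega$. Hence $B_\delta(y) \subset [x]_\Omega$.

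There is no genuinely hard step here: the argument is a direct bookkeeping exercise in applying Proposition~\ref{prop.abertos}. The only point requiring care is keeping straight which point serves as the base of the ball and which as the target in each application — the reachability $x \rightsquigarrow \tilde y$ must be obtained by perturbing the endpoint (using $y \in f^m(x)$), while $\tilde y \rightsquigarrow x$ must be obtained by perturbing the starting point (using $x \in f^p(y)$). Getting these roles right, and then noting that membership in $\Omega(f)$ comes for free from transitivity once both directions hold, is the substance of the proof.
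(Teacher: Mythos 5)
Your argument is correct and essentially identical to the paper's: the same two applications of Proposition~\ref{prop.abertos} (perturbing the endpoint for $x \rightsquigarrow \tilde y$ and the starting point for $\tilde y \rightsquigarrow x$), followed by taking the minimum of the two radii. The only cosmetic difference is that you prove openness of $\Omega(f)$ directly from $x \in f^n(x)$, whereas the paper obtains it as the union of the open classes $[x]_\Omega$; both are fine.
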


\begin{proof}
Given $y \in [x]_\Omega$, there exists $n$ such that $y \in f^n(x)$, since $x \rightsquigarrow y$. Hence, we can find $\delta_1 >0$ as in Proposition~\ref{prop.abertos}. Similarly, there exists $m$ such that $x \in f^m(y)$, as $y \rightsquigarrow x$, and thus we can find $\delta_2$  satisfying  $f^m(\tilde y) \supset B_{\delta_2}(x)$ for every $\tilde y \in B_{\delta_2}(y)$.

Taking $\delta = \min\{ \delta_1, \delta_2\}$, for any $z \in B_\delta(y)$, we have
$$
\begin{cases}
x \in B_{\delta}(x) \subset B_{\delta_1}(x),\\[4pt]
z \in B_\delta(y) \subset B_{\delta_2}(y),
\end{cases}
\quad \Longrightarrow \quad
\begin{cases}
z \in B_{\delta_1}(y) \subset f^n(x),\\[4pt]
x \in B_{\delta_2}(x) \subset f^m(z),
\end{cases}
\quad \Longrightarrow \quad
\begin{cases}
x \rightsquigarrow z,\\[4pt]
z \rightsquigarrow x.
\end{cases}
$$
Therefore, $z \in [x]_\Omega$, and consequently $[x]_\Omega$ is open. 
Since $\Omega(f) = \bigcup\limits_{x \in \Omega} [x]_\Omega$, it follows that $\Omega(f)$ is also open.
\end{proof}

\begin{proposition}
Let $f$ be an open set-valued map. If $x \in \Omega_{\mathrm{final}}$, then $[x]_\Omega = [x]_{\Omega_{\mathrm{final}}}$. In particular, both $[x]_{\Omega_{\mathrm{final}}}$ and $\Omega_{\mathrm{final}}(f)$ are open sets.
\label{prop.classef.sao.abertas}
\end{proposition}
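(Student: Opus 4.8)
The plan is to establish the set equality $[x]_\Omega = [x]_{\Omega_{\mathrm{final}}}$ by proving two inclusions, and then to deduce openness as an immediate corollary. Throughout, I would rely on the transitivity of the relation $\rightsquigarrow$: concatenating a finite trajectory from $a$ to $b$ with one from $b$ to $c$ yields a finite trajectory from $a$ to $c$, so that $a \rightsquigarrow b$ and $b \rightsquigarrow c$ imply $a \rightsquigarrow c$. I would also use the inclusion $\Omega_{\mathrm{final}}(f) \subset \Omega(f)$ recorded in the remark above.

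For the inclusion $[x]_{\Omega_{\mathrm{final}}} \subseteq [x]_\Omega$, I would take $y \in [x]_{\Omega_{\mathrm{final}}}$, so that $y \in \Omega_{\mathrm{final}}(f)$ and $x \rightsquigarrow y$. Since $x \in \Omega_{\mathrm{final}}(f)$, the relation $x \rightsquigarrow y$ forces $y \rightsquigarrow x$; together with $y \in \Omega_{\mathrm{final}}(f) \subset \Omega(f)$, this places $y$ in $[x]_\Omega$.

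The more delicate inclusion is $[x]_\Omega \subseteq [x]_{\Omega_{\mathrm{final}}}$, where the task is to verify that every $y \in [x]_\Omega$ actually belongs to $\Omega_{\mathrm{final}}(f)$. Given such a $y$, we have $x \rightsquigarrow y$ and $y \rightsquigarrow x$. To check final recurrence of $y$, I would assume $y \rightsquigarrow z$ for an arbitrary $z \in X$; then $x \rightsquigarrow y \rightsquigarrow z$ gives $x \rightsquigarrow z$ by transitivity, the hypothesis $x \in \Omega_{\mathrm{final}}(f)$ yields $z \rightsquigarrow x$, and concatenating with $x \rightsquigarrow y$ produces $z \rightsquigarrow y$. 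Hence every point reachable from $y$ reaches back to $y$, so $y \in \Omega_{\mathrm{final}}(f)$, and since $x \rightsquigarrow y$ we obtain $y \in [x]_{\Omega_{\mathrm{final}}}$. This is the only step where the hypothesis $x \in \Omega_{\mathrm{final}}(f)$ is genuinely needed, and it is the main (though modest) point of care in the argument.

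For the openness claim, the equality just established gives $[x]_{\Omega_{\mathrm{final}}} = [x]_\Omega$, which is open by Proposition~\ref{omegaaberto}; thus each final class is open. Writing $\Omega_{\mathrm{final}}(f)$ as the union $\bigcup_{x \in \Omega_{\mathrm{final}}} [x]_{\Omega_{\mathrm{final}}}$ of final classes then shows that $\Omega_{\mathrm{final}}(f)$ is open as well.
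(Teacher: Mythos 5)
Your argument is correct and follows essentially the same route as the paper: the reverse inclusion is handled by exactly the same chain $x \rightsquigarrow z$, then $z \rightsquigarrow x$ from final recurrence of $x$, then $z \rightsquigarrow y$ by transitivity, and openness is deduced from Proposition~\ref{omegaaberto}. The only difference is that you spell out the easy inclusion $[x]_{\Omega_{\mathrm{final}}} \subseteq [x]_\Omega$, which the paper dismisses as immediate from the definitions.
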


\begin{proof}
    Let $x \in \Omega_{\mathrm{final}}$. By definition, we have $[x]_{\Omega_{\mathrm{final}}} \subset [x]_\Omega$. On the other hand, given $y \in [x]_\Omega$, then $x \leftrightsquigarrow y$. If $y \rightsquigarrow z$, then $x \rightsquigarrow z$ by transitivity. Since $x \in \Omega_{\mathrm{final}}$, we also have $z \rightsquigarrow x$. By transitivity again, $z \rightsquigarrow y$, which implies that $ y \in \Omega_{\mathrm{final}}$. Hence, $[x]_{\Omega_{\mathrm{final}}} = [x]_\Omega$. 
\end{proof}

\begin{remark}
For any open set-valued map $f : X \to 2^X$, the sets $\Omega(f)$ and $\Omega_{\mathrm{final}}(f)$ are always nonempty. 
Indeed, to see that $\Omega(f) \neq \emptyset$, consider an orbit $(x_n)_{n \ge 0}$ of $f$. 
By the compactness of $X$, there exists a subsequence such that $x_n \to x$ for some $x \in X$.  
By Propositions~\ref{prop.abertos} and~\ref{prop.grafico.epsilon}, for sufficiently large $n$, we have $x_n \rightsquigarrow x$ and $x \rightsquigarrow x_n$, 
so $x \in \Omega(f)$.

The nonemptiness of $\Omega_{\mathrm{final}}(f)$ follows from Zorn’s Lemma.  
In fact, the relation $\rightsquigarrow$ induces a partial order on the equivalence classes of $\Omega(f)$, and the maximal elements with respect to this order correspond to final classes.  
See Lemma~5.5 in \cite{duarte2006combinatorial} for details.
\end{remark}

\begin{definition}
Let $f : X \to 2^X$ be a set-valued map. 
We say that $f$ is \emph{transitive} if, for any nonempty open sets $A, B \subset X$, there exists $n \in \mathbb{N}$ such that $f^n(A) \cap B \neq \emptyset$.
\end{definition}

The next result provides a useful reformulation of transitivity for open set-valued maps, showing that it suffices to consider single points instead of open sets.

\begin{proposition}
Let $f : X \to 2^X$ be an open and transitive set-valued map.  Then, for any $x \in X$ and any nonempty open set $B \subset X$, there exists $n \in \mathbb{N}$ such that 
$f^n(x) \cap B \neq \emptyset$. 
\label{prop.trans.pt}
\end{proposition}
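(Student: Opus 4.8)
The plan is to exploit the distinctive feature of open set-valued maps that is absent for single-valued maps: the image $f(x)$ of a single point is already a nonempty open set. This is precisely what lets us bridge the gap between the pointwise statement we want and the definition of transitivity, which is phrased entirely in terms of open sets.

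First I would fix $x \in X$ together with a nonempty open set $B \subset X$, and set $A := f(x)$. By the definition of an open set-valued map, $A$ is nonempty and open, so $A$ is an admissible choice of open set in the definition of transitivity.

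Next I would apply transitivity to the pair $(A, B)$ to obtain some $n \in \mathbb{N}$ with $f^n(A) \cap B \neq \emptyset$. Unwinding the set-valued iteration via the convention $f^n(S) = \bigcup_{s \in S} f^n(s)$, we have $f^n(A) = f^n(f(x)) = f^{n+1}(x)$. Hence $f^{n+1}(x) \cap B \neq \emptyset$, and taking $N = n+1$ completes the argument.

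The proof carries essentially no obstacle beyond this reduction; the only point requiring a word of care is the identity $f^n(f(x)) = f^{n+1}(x)$, which is immediate from the definition of composition for set-valued maps. (If one prefers, Proposition~\ref{prop.abertos} may be invoked to note that each $f^n(A)$ is itself open, although this is not needed for the conclusion.) The entire content of the statement lies in the observation that, under an open set-valued map, the orbit of a point spreads out into an open set after a single step, so transitivity phrased for open sets automatically upgrades to a statement about points.
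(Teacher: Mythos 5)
Your proposal is correct and coincides with the paper's own proof: both apply the definition of transitivity to the open set $A = f(x)$ and then use the identity $f^m(f(x)) = f^{m+1}(x)$ to pass from the open set back to the point. No issues.
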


\begin{proof}
    Since $f(x)$ is an open set, there exists, by the definition of transitivity, $m\in \mathbb{N}$ such that $f^m(f(x))\cap B \not= \emptyset$. Hence, if we let $n = m + 1$, we obtain $f^n(x) \cap B \neq \emptyset$, as desired.
\end{proof}

\begin{proposition}\label{restritoclassetransitivo}
Let $f : X \to 2^X$ be a set-valued map.  
Then the restriction $\restric{f}{[x]_{\Omega_{\mathrm{final}}(f)}}$ is transitive.
\end{proposition}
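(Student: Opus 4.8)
The plan is to reduce the statement to the single fact that $\Sigma := [x]_{\Omega_{\mathrm{final}}(f)}$ is one equivalence class of the relation $\rightsquigarrow$. First I would recall that $\Sigma$ is open (Proposition~\ref{prop.classef.sao.abertas}) and check that it is invariant, so that $\restric{f}{\Sigma}$ is a genuine open set-valued map on $\Sigma$ whose iterates coincide with those of $f$ at points of $\Sigma$. Concretely, if $y \in \Sigma$ and $z \in f(y)$, then $y \rightsquigarrow z$; since $y \in \Omega_{\mathrm{final}}$ this yields $z \rightsquigarrow y$, and a short transitivity argument (if $z \rightsquigarrow w$ then $y \rightsquigarrow w$, hence $w \rightsquigarrow y \rightsquigarrow z$) shows $z \in \Omega_{\mathrm{final}}$ with $z \leftrightsquigarrow y$, so that $z \in \Sigma$. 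Thus $f(y) \subset \Sigma$ for every $y \in \Sigma$.

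Next I would use that $\rightsquigarrow$ is an equivalence relation on $\Omega_{\mathrm{final}}(f)$ in order to connect arbitrary points of $\Sigma$. For any $a, b \in \Sigma$ we have $x \rightsquigarrow a$ and $x \rightsquigarrow b$; symmetry gives $a \rightsquigarrow x$, and transitivity then gives $a \rightsquigarrow b$. Unwinding the definition of $\rightsquigarrow$, this means precisely that $b \in f^m(a)$ for some integer $m \ge 1$.

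Finally, to verify transitivity of $\restric{f}{\Sigma}$, I would take nonempty open sets $A, B \subset \Sigma$ and pick $a \in A$ and $b \in B$. By the previous step there is $m \ge 1$ with $b \in f^m(a) \subset f^m(A)$, whence $b \in f^m(A) \cap B$ and in particular $f^m(A) \cap B \neq \emptyset$, which is exactly the required condition. I expect the only delicate point to be the invariance of $\Sigma$, that is, confirming that the restriction is well defined as an open set-valued map on $\Sigma$ so that the ambient iterates $f^m$ may legitimately be used; once this is settled, transitivity follows immediately from a single witnessing trajectory and does not even require the openness of $A$ and $B$ beyond their being nonempty.
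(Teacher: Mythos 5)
Your proposal is correct and follows essentially the same route as the paper: both arguments pick one point in each of the two given sets, use the fact that any two points of the final class $[x]_{\Omega_{\mathrm{final}}(f)}$ are related by $\rightsquigarrow$ (via symmetry and transitivity through $x$) to produce an $m$ with $b \in f^m(a)$, and conclude $f^m(A)\cap B \neq \emptyset$. The extra care you take with the invariance of the class under $f$ is sound (and is recorded separately in the paper via Theorem~\ref{teo.duarte}), but the paper's proof of this proposition does not need it and omits it.
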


\begin{proof}
Let $U, V \subset [x]_{\Omega_{\mathrm{final}}(f)}$ be nonempty sets.  
Take $u \in U$ and $v \in V$.  
Since $u \rightsquigarrow v$, there exists $n \in \mathbb{N}$ such that $v \in f^n(u)$.  
Hence $f^n(U) \cap V \neq \emptyset$, which proves that the restriction of $f$ to $[x]_{\Omega_{\mathrm{final}}(f)}$ is transitive.
\end{proof}

\begin{proposition}\label{umaclasse}
Let $f : X \to 2^X$ be a transitive open set-valued map. 
Then $f(X) \subset \Omega(f)$. 
Moreover, $\Omega(f) = \Omega_{\mathrm{final}}(f)$, and this set consists of exactly one equivalence class.
\end{proposition}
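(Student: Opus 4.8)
The plan is to isolate a single workhorse statement and then harvest all three conclusions from it. Call it the \emph{reaching claim}: for every $a \in X$ and every $b \in f(X)$ one has $a \rightsquigarrow b$. I would deduce the whole proposition from this claim together with the elementary observation that $z \in \Omega(f)$ forces $z \in f^p(z)$ for some $p \ge 1$ (taking the penultimate point of a loop through $z$), so that $\Omega(f) \subset f(X)$.

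To prove the reaching claim I would combine the robustness of the graph (Proposition~\ref{prop.abertos}) with point-transitivity (Proposition~\ref{prop.trans.pt}). Writing $b \in f(w)$ for some $w \in X$, Proposition~\ref{prop.abertos} applied with $n=1$ produces $\delta_0 > 0$ with $B_{\delta_0}(b) \subset f(\tilde w)$ for every $\tilde w \in B_{\delta_0}(w)$; in particular $b \in f(\tilde w)$ for all $\tilde w \in B_{\delta_0}(w)$. Next I would apply Proposition~\ref{prop.trans.pt} to the point $a$ and the open set $B_{\delta_0}(w)$ to obtain $m \ge 1$ and a point $w' \in f^m(a) \cap B_{\delta_0}(w)$. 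Then $a \rightsquigarrow w'$ along a trajectory of length $m$, while $w' \in B_{\delta_0}(w)$ forces $b \in f(w')$, i.e. $w' \rightsquigarrow b$ in one step; concatenating gives $a \rightsquigarrow b$.

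With the reaching claim in hand, the three assertions become bookkeeping. First, $f(X) \subset \Omega(f)$: for $y \in f(X)$ apply the claim with $a = b = y$ to get $y \rightsquigarrow y$, and combined with $\Omega(f) \subset f(X)$ this upgrades to $\Omega(f) = f(X)$. Second, $\Omega(f)$ is a single class: for $y, z \in \Omega(f) = f(X)$ the claim applied to $(a,b) = (y,z)$ and then to $(z,y)$ yields $y \rightsquigarrow z$ and $z \rightsquigarrow y$, so $y \leftrightsquigarrow z$. Third, $\Omega(f) = \Omega_{\mathrm{final}}(f)$: the inclusion $\Omega_{\mathrm{final}}(f) \subset \Omega(f)$ is automatic, and for $y \in \Omega(f) = f(X)$, whenever $y \rightsquigarrow w$ the claim applied to the pair $(a,b) = (w,y)$ (legitimate because $y \in f(X)$) returns $w \rightsquigarrow y$; hence $y \in \Omega_{\mathrm{final}}(f)$.

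The hard part—really the only nontrivial point—is the reaching claim, and specifically the idea of steering an orbit of $a$ into a small ball $B_{\delta_0}(w)$ around a chosen preimage $w$ of the target $b$, while using openness of the graph to guarantee that every point of that ball still maps onto $b$. This interplay is the crux: openness makes the one-step relation $b \in f(w)$ stable under perturbing $w$, and transitivity lets us land near $w$, so that arbitrary trajectories can be closed into loops and arbitrary pairs connected. I would also be careful to fix the convention that $\rightsquigarrow$ uses trajectories of length $\ge 1$, since this is exactly what makes $y \rightsquigarrow w$ force $w \in f(X)$ and $z \rightsquigarrow z$ force $z \in f^p(z)$ with $p \ge 1$, and hence what validates the inclusion $\Omega(f) \subset f(X)$ underlying the whole argument.
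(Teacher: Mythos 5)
Your proof is correct, and it reorganizes the argument in a genuinely tighter way than the paper does. The paper's proof uses your key mechanism only in the special case $a=b=y$: it picks a preimage $x$ of $y$, uses Proposition~\ref{prop.abertos} to get a neighborhood $A\ni x$ with $y\in f(a)$ for all $a\in A$, and steers $f^n(y)$ into $A$ to close the loop $y\rightsquigarrow y$. For the remaining two assertions it switches tools: it invokes the openness of the class $[x]_\Omega$ (Proposition~\ref{omegaaberto}) and transitivity to steer any $y$ with $x\rightsquigarrow y$ back into $[x]_\Omega$. You instead prove the full reaching claim ($a\rightsquigarrow b$ for every $a\in X$, $b\in f(X)$) once, by the same openness-plus-steering device, add the elementary reverse inclusion $\Omega(f)\subset f(X)$, and harvest all three conclusions without ever needing Proposition~\ref{omegaaberto}. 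What your route buys is economy and an early appearance of the identity $\Omega(f)=f(X)$, which the paper only establishes later (for $\Omega_{\mathrm{final}}$) in Proposition~\ref{proposicaosobrejetor}; what the paper's route buys is that its class-steering step works directly with the equivalence classes, which is the structure reused in the spectral decomposition. Your closing remark about the length-$\ge 1$ convention for $\rightsquigarrow$ is well placed: it is exactly what makes $\Omega(f)\subset f(X)$ true, and your argument genuinely depends on it (to apply the reaching claim to points of $\Omega(f)$), whereas the paper's does not.
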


\begin{proof}
We first claim that $f(X) \subset \Omega(f)$. 
Indeed, let $y \in f(X)$. 
Then there exists $x \in X$ such that $y \in f(x)$. 
By Proposition~\ref{prop.abertos}, there exists an open set $A \ni x$ such that $y \in f(a)$ for all $a \in A$. 
By the transitivity of $f$ (Proposition~\ref{prop.trans.pt}), there exists $n \in \mathbb{N}$ such that $f^n(y) \cap A \neq \emptyset$. Hence, there exists $a \in A$ with $a \in f^n(y)$, that is, $y \rightsquigarrow a$. Since $y \in f(a)$, we also have $a \rightsquigarrow y$, and therefore $y \rightsquigarrow y$, which shows that $y \in \Omega(f)$. This proves that $f(X) \subset \Omega(f)$.

Now, let $x \in \Omega(f)$ and $y \in X$ be such that $x \rightsquigarrow y$. 
By Proposition~\ref{omegaaberto}, the set $[x]_\Omega$ is open. 
Applying transitivity again (Proposition~\ref{prop.trans.pt}), there exists $z \in [x]_\Omega$ such that $y \rightsquigarrow z$. 
Since $z \in [x]_\Omega$ implies $z \leftrightsquigarrow x$, we obtain $y \rightsquigarrow x$. 
Hence $x \in \Omega_{\mathrm{final}}(f)$, and thus $\Omega(f) \subset \Omega_{\mathrm{final}}(f)$. 
As the reverse inclusion always holds, we conclude that $\Omega(f) = \Omega_{\mathrm{final}}(f)$.

Finally, we show that $\Omega(f)$ consists of a single equivalence class. 
Let $x \in \Omega(f)$ and $y \in \Omega(f)$ be arbitrary. 
By transitivity, there exists $z \in [x]_\Omega$ such that $y \rightsquigarrow z$. 
Since $z \leftrightsquigarrow x$, it follows that $y \rightsquigarrow x$. 
On the other hand, because $y \in \Omega_{\mathrm{final}}(f)$, we also have $x \rightsquigarrow y$. 
Therefore $y \leftrightsquigarrow x$, which shows that $\Omega(f) = [x]_\Omega$.
\end{proof}

\begin{remark}
For nontransitive open set-valued maps, it is possible to have more than one final class.  
Figure~\ref{duasclasses} illustrates the graph of an open set-valued map $f$, defined on $X = [0,1]$, for which $\Omega_{\mathrm{final}}(f)$ consists of exactly two distinct classes.
\end{remark}

\begin{figure}[htb]
    \centering
    \includegraphics[scale=0.83]{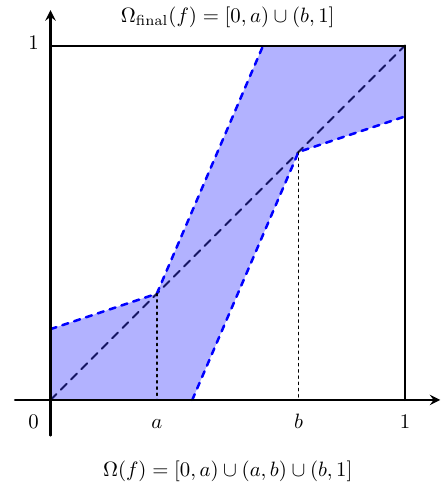}
    \caption{The graph of a open set-valued map with two final classes.}
    \label{duasclasses}
\end{figure}

\begin{proposition}\label{proposicaosobrejetor}
Let $f : X \to 2^X$ be a transitive open set-valued map. 
Then $\Omega_{\mathrm{final}}(f) = f(X)$, and this set is dense in $X$.
\end{proposition}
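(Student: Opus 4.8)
The plan is to lean on Proposition~\ref{umaclasse}, which, under the transitivity hypothesis, already supplies the two ingredients $f(X) \subset \Omega(f)$ and $\Omega(f) = \Omega_{\mathrm{final}}(f)$. Chaining these gives one inclusion for free, namely $f(X) \subset \Omega(f) = \Omega_{\mathrm{final}}(f)$. Thus the set equality $\Omega_{\mathrm{final}}(f) = f(X)$ reduces entirely to establishing the reverse inclusion $\Omega_{\mathrm{final}}(f) = \Omega(f) \subset f(X)$.

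For that reverse inclusion I would argue straight from the definition of the recurrent set. Let $x \in \Omega(f)$, so that $x \rightsquigarrow x$. Unwinding the definition of $\rightsquigarrow$, there is a finite trajectory $x = x_0, x_1, \dots, x_n = x$ with $n \ge 1$ and $x_{k+1} \in f(x_k)$ for each $k$. In particular $x = x_n \in f(x_{n-1})$, so $x$ belongs to the image $f(X)$. This yields $\Omega(f) \subset f(X)$ and completes the proof that $\Omega_{\mathrm{final}}(f) = \Omega(f) = f(X)$.

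For the density claim I would invoke the pointwise reformulation of transitivity in Proposition~\ref{prop.trans.pt}. Fix any point $p \in X$ and any nonempty open set $B \subset X$. By Proposition~\ref{prop.trans.pt} there exists $n \ge 1$ with $f^n(p) \cap B \neq \emptyset$. Since $f^n(p) = f\bigl(f^{n-1}(p)\bigr) \subset f(X)$, we conclude $f(X) \cap B \neq \emptyset$. As $B$ was an arbitrary nonempty open set, $f(X)$ meets every open set of $X$ and is therefore dense.

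The argument is mostly a repackaging of previously established facts, so I do not anticipate a genuine obstacle; the one point demanding care is the convention on the length of the trajectory witnessing $x \rightsquigarrow x$. I must ensure that this trajectory has length $n \ge 1$, so that $x$ really occurs as an image point $x_n \in f(x_{n-1})$, rather than the degenerate length-zero ``trajectory'' $x_0 = x$ that would render $\Omega(f) = X$ and trivialize the statement. The same $n \ge 1$ guarantee hidden in Proposition~\ref{prop.trans.pt} (its proof produces $n = m+1$) is precisely what justifies $f^n(p) \subset f(X)$ in the density step.
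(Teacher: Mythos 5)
Your proof is correct and follows essentially the same route as the paper: one inclusion comes from Proposition~\ref{umaclasse}, the other from the elementary observation that a recurrent point must lie in the image (the paper phrases this contrapositively, showing $x \notin f(X)$ forces $x \notin \Omega_{\mathrm{final}}(f)$, while you argue directly that $x \rightsquigarrow x$ with $n \ge 1$ puts $x \in f(X)$), and density follows from Proposition~\ref{prop.trans.pt} exactly as in the paper. Your care about the convention $n \ge 1$ in the definition of $\rightsquigarrow$ is well placed and consistent with how the paper uses that relation elsewhere.
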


\begin{proof}
From Proposition~\ref{umaclasse}, we already know that $f(X) \subset \Omega_{\mathrm{final}}(f)$. 
Let $x \notin f(X)$   and $y \in f(x)$. Then $x \rightsquigarrow y$. However, since $x \notin \bigcup_{n=1}^\infty f^n(y) \subset f(X)$, it follows that $y \not\rightsquigarrow x$. 
Hence $x \notin \Omega_{\mathrm{final}}(f)$, which proves that $\Omega_{\mathrm{final}}(f) \subset f(X)$. 
Therefore, $\Omega_{\mathrm{final}}(f) = f(X)$.

To prove that $\Omega_{\mathrm{final}}(f)$ is dense in $X$, note that if $x \in \Omega_{\mathrm{final}}(f)$, then $f^n(x) \subset \Omega_{\mathrm{final}}(f)$ for all $n \in \mathbb{N}$. 
Hence
\[
\bigcup_{n=1}^\infty f^n(x) \subset \Omega_{\mathrm{final}}(f).
\]
Now let $V \subset X$ be a nonempty open set. 
By transitivity (Proposition~\ref{prop.trans.pt}), there exists $m \in \mathbb{N}$ such that 
$f^m(x) \cap V \neq \emptyset$. 
Consequently,
\[
\emptyset \neq f^m(x) \cap V \subset 
\bigcup_{n=1}^\infty f^n(x) \cap V \subset 
\Omega_{\mathrm{final}}(f) \cap V,
\]
showing that $\Omega_{\mathrm{final}}(f)$ intersects every nonempty open subset of $X$, and hence it is dense.
\end{proof}

\begin{proposition}\label{prop.imagem.de.transitivo.como.iterados.de.ponto}
Let $f : X \to 2^X$ be a transitive open set-valued map, and let $x \in X$. 
Then 
\[
f(X) = \bigcup_{n = 1}^{\infty} f^n(x).
\]
\end{proposition}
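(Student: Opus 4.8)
The plan is to establish the two inclusions separately, of which only one requires real work. For the inclusion $\bigcup_{n=1}^{\infty} f^n(x) \subset f(X)$, I would simply observe that for every $n \geq 1$ the set $f^n(x)$ is the image under $f$ of the set $f^{n-1}(x) \subset X$ (with the convention $f^0(x) := \{x\}$), so $f^n(x) \subset f(X)$; taking the union over $n$ gives the claimed containment. This step is immediate and uses nothing beyond the definition of the iterates.

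The substantive direction is $f(X) \subset \bigcup_{n=1}^{\infty} f^n(x)$, and here the idea is to combine the openness property of Proposition~\ref{prop.abertos} with transitivity in the pointwise form of Proposition~\ref{prop.trans.pt}. Fix $y \in f(X)$; then there exists $w \in X$ with $y \in f(w)$. Applying Proposition~\ref{prop.abertos} with $n = 1$ to the pair $(w, y)$, I obtain $\delta > 0$ such that $B_\delta(y) \subset f(\tilde w)$ for every $\tilde w \in B_\delta(w)$. The key observation is that the single point $y$ already lies in the image of \emph{every} point near $w$, so it suffices to reach the neighborhood $B_\delta(w)$ starting from the arbitrary point $x$.

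Now I would invoke transitivity: by Proposition~\ref{prop.trans.pt}, applied to the point $x$ and the nonempty open set $B_\delta(w)$, there is some $m \in \mathbb{N}$ with $f^m(x) \cap B_\delta(w) \neq \emptyset$. Choosing $\tilde w \in f^m(x) \cap B_\delta(w)$, the previous step gives $y \in B_\delta(y) \subset f(\tilde w)$, while $\tilde w \in f^m(x)$ yields $f(\tilde w) \subset f^{m+1}(x)$. Hence $y \in f^{m+1}(x) \subset \bigcup_{n=1}^{\infty} f^n(x)$, which completes the inclusion.

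I expect the only delicate point to be the bookkeeping in this final chain: one must use the \emph{same} radius $\delta$ both to guarantee $B_\delta(y) \subset f(\tilde w)$ and as the radius of the target neighborhood $B_\delta(w)$ fed into transitivity, so that the intermediate point $\tilde w$ simultaneously belongs to $f^m(x)$ and is close enough to $w$ for the openness conclusion to apply. Everything else is a direct application of the two cited propositions; in particular, no use of the single-class description of $\Omega_{\mathrm{final}}(f)$ is needed, since transitivity alone already supplies a trajectory from the arbitrary $x$ into any prescribed neighborhood.
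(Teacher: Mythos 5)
Your proof is correct, and it takes a genuinely different route from the paper's. The paper proves the nontrivial inclusion $f(X) \subset \bigcup_{n \ge 1} f^n(x)$ by passing through the structure of the final recurrent set: it uses Proposition~\ref{proposicaosobrejetor} to identify $f(X)$ with $\Omega_{\mathrm{final}}(f)$ and Proposition~\ref{umaclasse} to see that this set is a single equivalence class, so that any point $z \in \Omega_{\mathrm{final}}(f)$ reached from $x$ satisfies $z \leftrightsquigarrow y$, whence $x \rightsquigarrow y$. You instead work directly at the preimage: given $y \in f(w)$, Proposition~\ref{prop.abertos} gives a $\delta$-neighborhood of $w$ all of whose points already have $y$ in their image, and Proposition~\ref{prop.trans.pt} steers some iterate $f^m(x)$ into that neighborhood, yielding $y \in f^{m+1}(x)$. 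Your bookkeeping with the single radius $\delta$ is exactly right, and there is no circularity, since Propositions~\ref{prop.abertos} and~\ref{prop.trans.pt} are established independently of this statement. What your approach buys is economy and self-containment --- it needs neither the identification $f(X) = \Omega_{\mathrm{final}}(f)$ nor the single-class description; what the paper's approach buys is that it reuses structural facts already proved and situates the statement within the equivalence-class picture that drives the spectral decomposition in Section~\ref{espectral}.
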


\begin{proof}
The inclusion 
\[
\bigcup_{n = 1}^{\infty} f^n(x) \subset f(X)
\]
is immediate. 

Conversely, let $y \in f(X)$. 
By Proposition~\ref{prop.trans.pt}, there exists $m \in \mathbb{N}$ such that 
\[
f^m(x) \cap \Omega_{\mathrm{final}}(f) \neq \emptyset.
\]
Hence, there exists $z \in \Omega_{\mathrm{final}}(f)$ with $x \rightsquigarrow z$. 
By Proposition~\ref{proposicaosobrejetor}, we have $y \in \Omega_{\mathrm{final}}(f)$, 
and by Proposition~\ref{umaclasse}, all points in $\Omega_{\mathrm{final}}(f)$ belong to the same equivalence class. 
Thus $y \leftrightsquigarrow z$, which implies $x \rightsquigarrow y$. 
Therefore $y \in f^n(x)$ for some $n \ge 1$, and we conclude that
\[
f(X) \subset \bigcup_{n = 1}^{\infty} f^n(x).
\]
The result follows.
\end{proof}

\section{Spectral decomposition for open set-valued maps} \label{espectral}

In this section, we revisit and extend the structural framework developed by P.~Duarte and M.~Torres~\cite{duarte2006combinatorial}. 
Our purpose is to incorporate the notions of transitivity and mixing into this setting, 
thereby complementing their description of the dynamics of open set-valued maps.
This will lead to a spectral decomposition result adapted to open set-valued maps.
As a preliminary step, we examine the relationship between transitivity and mixing in this context.

\begin{definition}
Let $f : X \to 2^X$ be a set-valued map. 
We say that $f$ is (topologically) \emph{mixing} if, for any nonempty open sets $A, B \subset X$, there exists $n_0 \in \mathbb{N}$ such that $f^n(A) \cap B \neq \emptyset $ for all $n \ge n_0$.
\end{definition}

In general, transitivity and mixing are distinct notions. 
The following examples illustrate this distinction and show the importance of the openness and connectedness assumptions that will appear later.

\medskip

\noindent
\textbf{Example 1.} 
Let $f : [0,1] \to 2^{[0,1]}$ be given by
\[
f(x) =
\begin{cases}
(\frac{1}{2}, 1], &  \text{if } x < \frac{1}{2}, \\[3pt]
[0,\frac{1}{2}), &  \text{if } x \ge \frac{1}{2}.
\end{cases}
\]
This map is transitive but not mixing. 
However, it is not an open set-valued map: indeed, $f([0,1])$ is not connected, as required by Proposition~\ref{prop.imagem.conexa}.

\medskip

\noindent
\textbf{Example 2.} 
Let $X = [0,1] \cup [3,4]$ and define $f : X \to 2^X$ by
\[
f(x) =
\begin{cases}
[0,1], &  \text{if } x \in [3,4] \\[3pt]
[3,4], &  \text{if } x \in [0,1].
\end{cases}
\]
This is a transitive open set-valued map that is not mixing. 
Here, the failure of mixing arises from the lack of connectedness of $X$. 
We will later see that, under connectedness, transitivity and mixing are indeed closely related.

\begin{proposition}\label{proposicao12}
Let $f : X \to 2^X$ be a transitive open set-valued map, and let $U \subset X$ be an open set such that $\overline{U} \subset f(X)$. 
Then there exists $n \ge 1$ such that $U \subset f^n(U)$.
\end{proposition}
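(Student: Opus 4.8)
The plan is to exploit the recurrence of every point of $\overline{U}$ together with the robustness of the relation $z \in f^{k}(w)$ furnished by Proposition~\ref{prop.abertos}, and then to reconcile finitely many local return times using the compactness of $\overline{U}$.

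First I would note that the hypothesis $\overline{U}\subset f(X)$ forces every point of $\overline{U}$ into the recurrent set: by Proposition~\ref{umaclasse}, transitivity gives $f(X)\subset\Omega(f)$, so each $z\in\overline{U}$ satisfies $z\rightsquigarrow z$, i.e.\ $z\in f^{k_z}(z)$ for some $k_z\ge 1$. Applying Proposition~\ref{prop.abertos} with $x=y=z$ and $n=k_z$ produces a radius $\delta_z>0$ with
\[
B_{\delta_z}(z)\subset f^{k_z}(w)\quad\text{for every } w\in B_{\delta_z}(z).
\]
The key local fact I would then extract is that this inclusion survives for every positive multiple of $k_z$: since $f^{k_z}(w)\supset B_{\delta_z}(z)$ for each $w\in B_{\delta_z}(z)$, the ball is carried over itself under $f^{k_z}$, and an induction on $j$ (composing $f^{k_z}$ with itself) yields $B_{\delta_z}(z)\subset f^{jk_z}(w)$ for all $j\ge 1$ and all $w\in B_{\delta_z}(z)$.

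Next I would cover $\overline{U}$ by the balls $\{B_{\delta_z}(z)\}_{z\in\overline{U}}$ and use compactness of $\overline{U}$, a closed subset of the compact space $X$, to extract a finite subcover $B_{\delta_{z_1}}(z_1),\dots,B_{\delta_{z_r}}(z_r)$ with return times $k_{z_1},\dots,k_{z_r}$. Because each center $z_i$ lies in $\overline{U}$, the ball $B_{\delta_{z_i}}(z_i)$ meets $U$, so I may pick $w_i\in U\cap B_{\delta_{z_i}}(z_i)$. Setting $n=\operatorname{lcm}(k_{z_1},\dots,k_{z_r})$, the local fact gives $B_{\delta_{z_i}}(z_i)\subset f^{n}(w_i)\subset f^{n}(U)$ for every $i$, since $n$ is a multiple of each $k_{z_i}$ and $w_i\in U$. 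Finally, any $z\in U\subset\overline{U}$ lies in some $B_{\delta_{z_i}}(z_i)$, whence $z\in f^{n}(U)$, which proves $U\subset f^{n}(U)$.

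The main obstacle I anticipate is the uniformization: the recurrence times $k_z$ depend on $z$, and the argument succeeds only because Proposition~\ref{prop.abertos} supplies the inclusion for all starting points $w$ in a whole neighborhood (not merely for $w=z$), which is what lets me launch the iterate from a point $w_i\in U$ rather than from $z_i$ itself, and because passing to multiples of $k_z$ allows a single exponent $n$ to serve all finitely many balls at once. The role of the closure hypothesis $\overline{U}\subset f(X)$ is precisely to make $\overline{U}$ compact, so that only finitely many return times must be reconciled.
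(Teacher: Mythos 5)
Your proof is correct and follows essentially the same route as the paper's: recurrence of each point of $\overline{U}$ via transitivity, the uniform neighborhood inclusion from Proposition~\ref{prop.abertos}, passage to multiples of the local return times, a finite subcover of the compact set $\overline{U}$, and the least common multiple of the finitely many return times. The only cosmetic difference is that you invoke Proposition~\ref{umaclasse} ($f(X)\subset\Omega(f)$) where the paper cites Proposition~\ref{proposicaosobrejetor}; both yield the needed recurrence $z\rightsquigarrow z$.
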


\begin{proof}
Let $x \in \overline{U}$. 
By Proposition~\ref{proposicaosobrejetor}, we have $x \in f(X) = \Omega_{\mathrm{final}}(f)$, and hence $x \rightsquigarrow x$. 
By Proposition~\ref{prop.abertos}, there exist an open neighborhood $V_x \ni x$ and an integer $n_x \ge 1$ such that 
\[
V_x \subset f^{n_x}(y) \quad \text{for all } y \in V_x.
\]
The family $\{V_x\}_{x \in \overline{U}}$ forms an open cover of $\overline{U}$, which is compact since $X$ is compact. 
Therefore, there exist finitely many points $x_1, \ldots, x_k \in \overline{U}$ such that 
\[
\overline{U} \subset V_{x_1} \cup \cdots \cup V_{x_k}.
\]
Let $n = \operatorname{lcm}(n_{x_1}, \ldots, n_{x_k})$. 
For each $j = 1, \ldots, k$, we have
\[
V_{x_j} \subset f^{n_{x_j}}(U \cap V_{x_j}) 
\implies 
V_{x_j} \subset f^{kn_{x_j}}(U \cap V_{x_j}) 
\quad \text{for every } k \in \mathbb{N},
\]
and in particular for $k = n/n_{x_j}$ we obtain $V_{x_j} \subset f^{n}(U \cap V_{x_j})$.
Hence, $\overline{U} \subset f^n(U)$, and therefore $U \subset f^n(U)$, as desired.
\end{proof}

\begin{proposition}\label{proposicao16}
Let $X$ be a connected space, and let $f : X \to 2^X$ be a transitive open set-valued map. 
Suppose $U_0 \subset X$ is an open set such that 
$U_0 \subset f^n(U_0)$ for some $n \ge 1$. 
For each $k \ge 1$, define 
\[
U_k = f^k(U_0)
\quad \text{and} \quad
V_k = \bigcup_{j \ge 0} f^{jn}(U_k).
\]
Then $\bigcap_{j \ge 0} V_j \neq \emptyset$.
\end{proposition}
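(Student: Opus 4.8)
The plan is to first unwind the definitions and see that the family $\{V_j\}$ is, up to reindexing, \emph{finite and cyclically permuted} by $f$. Since $U_0\subset f^n(U_0)$, applying $f^k$ gives $U_k\subset f^n(U_k)$, so the sets $f^{jn}(U_k)$ increase with $j$ and $V_k=\bigcup_{j\ge0}f^{jn}(U_k)$ is an increasing union. A direct computation using $f\big(\bigcup_j f^{jn}(U_k)\big)=\bigcup_j f^{jn}(U_{k+1})$ yields $f(V_k)=V_{k+1}$, and the inclusion $U_k\subset f^n(U_k)$ gives $V_{k+n}=V_k$. Hence $f^n(V_k)=V_k$, the sets $V_1,\dots,V_n$ are permuted cyclically by $f$, and the intersection in the statement collapses to the finite intersection $A:=\bigcap_{k=1}^n V_k$, with indices read in $\Z/n\Z$. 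The goal becomes $A\neq\emptyset$.

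The engine of the argument is a lemma extracted from the openness of the graph (Proposition~\ref{prop.abertos}). Call a set $T\subset\Z/n\Z$ \emph{admissible} when $\bigcap_{k\in T}V_k\neq\emptyset$; note admissibility is shift-invariant, since $p\in\bigcap_{k\in T}V_k$ forces $f(p)\subset\bigcap_{k\in T+1}V_k$. I will show: if some $z$ lies in $\bigcap_{k\in T}\overline{V_k}$, then $T$ is admissible. Indeed, choosing $y\in f(z)$ and the radius $\delta$ provided by Proposition~\ref{prop.abertos}, every $\tilde z\in B_\delta(z)$ satisfies $B_\delta(y)\subset f(\tilde z)$; picking for each $k\in T$ a point $\tilde z_k\in B_\delta(z)\cap V_k$ gives $B_\delta(y)\subset f(V_k)=V_{k+1}$, so $B_\delta(y)\subset\bigcap_{k\in T}V_{k+1}$, whence $T+1$ and therefore $T$ is admissible. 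In particular, whenever the closures of two admissible sets meet, their union is again admissible.

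With this in hand I would run a maximality argument. Let $s$ be the largest size of an admissible set and suppose, for contradiction, that $s<n$; fix an admissible $S$ with $|S|=s$ and put $P_T=\bigcap_{k\in T}V_k$. The set $G=\bigcup_{t}P_{S+t}$ is nonempty, open, and forward invariant, since $f(G)\subset\bigcup_t P_{S+t+1}=G$, so transitivity forces $G$ to be dense; consequently $X=\overline G=\bigcup_t\overline{P_{S+t}}$. On the other hand, if two \emph{distinct} shifts $S+t\neq S+t'$ had closures meeting, the lemma would make $(S+t)\cup(S+t')$ admissible of size strictly larger than $s$, contradicting maximality; hence the distinct sets among $\{\overline{P_{S+t}}\}$ are pairwise disjoint. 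They are finitely many nonempty closed sets covering $X$, and since $S$ is a proper nonempty subset of $\Z/n\Z$ its shift-orbit contains at least two distinct members, so there are at least two such sets. This exhibits $X$ as a disjoint union of two or more nonempty closed sets, contradicting connectedness. Therefore $s=n$, i.e.\ some point lies in every $V_k$, and $A\neq\emptyset$.

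The step I expect to be most delicate is precisely the passage from \emph{partial} overlaps to the \emph{total} intersection: knowing that various pairs or triples of the $V_k$ meet does not by itself produce a common point, and a purely topological attempt (hoping all the $\overline{V_k}$ share a point) fails, as three arcs covering a circle show. What rescues the argument is combining the openness-driven lemma, which upgrades any meeting of closures into a genuine open overlap and thereby enlarges admissible sets, with the transitivity-driven density of forward-invariant open sets, and finally the connectedness of $X$; the disconnection contradiction is exactly what excludes a nontrivial cyclic behaviour of period greater than one, in the same way that Example~2 shows the conclusion must fail once connectedness is dropped.
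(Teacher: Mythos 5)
Your proof is correct, and while it shares the paper's overall skeleton---reduce to the finitely many sets $V_1,\dots,V_n$ cyclically permuted by $f$, then grow an index set whose corresponding $V_k$'s have a common point until it is all of $\Z/n\Z$, with connectedness forcing each enlargement---the mechanism driving the enlargement is genuinely different. The paper invokes Proposition~\ref{prop.imagem.de.transitivo.como.iterados.de.ponto} to show that the forward iterates of the current partial intersection cover $f(X)$, so the shifted partial intersections form a finite \emph{open} cover of the connected set $f(X)$ and two of them must meet outright; iterating this is how the paper enlarges the intersection (its ``repeating this argument finitely many times'' is left informal). You instead note that the union $G$ of the shifted partial intersections is a nonempty, open, forward-invariant set, hence dense by transitivity, so that $X$ is covered by the finitely many \emph{closures} $\overline{P_{S+t}}$; this requires your extra lemma, extracted from Proposition~\ref{prop.abertos}, that a common point of the closures of several $V_k$ pushes forward one step to a common interior point, after which maximality of $|S|$ makes the distinct closures pairwise disjoint and contradicts connectedness of $X$. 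Your route buys a crisper termination argument (the explicit maximal admissible set and the shift-invariance bookkeeping) and uses connectedness of $X$ rather than of $f(X)$; its cost is the additional closure-upgrading lemma, which the paper's direct appeal to Proposition~\ref{prop.imagem.de.transitivo.como.iterados.de.ponto} avoids. One small point worth making explicit in your write-up: $s\ge 1$ because each $V_k\supset U_k\neq\emptyset$, which also disposes of the trivial case $n=1$.
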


\begin{proof}
Because $f$ is transitive, we have $\Omega(f) = \Omega_{\mathrm{final}}(f) = f(X)$, and by Proposition~\ref{prop.imagem.de.transitivo.como.iterados.de.ponto}, 
the union of the forward iterates of $U_0$ covers $f(X)$:
\begin{equation}\label{eq.1}
f(X) = \bigcup_{j \ge 0} f^j(U_0).
\end{equation}

Since $U_n = f^n(U_0) \supset U_0$, repeated application of $f^n$ gives  $f^{kn}(U_j) \subset f^{(k+1)n}(U_j)  \quad \text{for all } k \in \mathbb{N}$.
Moreover, as $U_{j+n} = f^n(U_j) = f^n(f^j(U_0))$, we obtain $V_{j+n} = V_j$. 
Substituting this into~\eqref{eq.1}, it follows that 
\[
f(X) = V_1 \cup V_2 \cup \cdots \cup V_n.
\]
By Proposition~\ref{prop.imagem.conexa}, $f(X)$ is connected, so at least two of these sets intersect. 
Hence, there exist distinct $a, b \in \{1, \dots, n\}$ such that 
\[
V_a \cap V_b \neq \emptyset.
\]

By transitivity, the forward iterates of $V_a \cap V_b$ cover $f(X)$; that is,
\[
f(X) = \bigcup_{k \ge 0} f^k(V_a \cap V_b).
\]
Since $f(V_a \cap V_b) \subset V_{a+1} \cap V_{b+1}$, we deduce that
\[
f(X) = \bigcup_{0 \le i < j \le n} (V_i \cap V_j).
\]
Let $W_{ij} = V_i \cap V_j$. 
Because $f(X)$ is connected and covered by finitely many of the sets $W_{ij}$, at least two of them intersect; say 
$W_{ab} \cap W_{cd} \neq \emptyset$ for distinct pairs $(a,b) \neq (c,d)$. 
Consequently, there exist indices $\alpha, \beta, \gamma \in \{1, \dots, n\}$ such that 
\[
V_\alpha \cap V_\beta \cap V_\gamma \neq \emptyset.
\]
Repeating this argument finitely many times yields $\bigcap_{j \ge 0} V_j \neq \emptyset$, as claimed.
\end{proof}

\begin{proposition}\label{proposicao17}
Assume the same hypotheses as in Proposition~\ref{proposicao16}. 
Then $V_0 = f(X)$.
\end{proposition}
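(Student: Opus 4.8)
The plan is to prove the two inclusions separately; the inclusion $V_0 \subset f(X)$ is immediate, so the real content is $f(X) \subset V_0$. First I would record two structural facts about the family $\{V_k\}$, both essentially contained in the proof of Proposition~\ref{proposicao16}. Since $U_{k+1} = f(U_k)$ and composition of set-valued maps is associative, one has $f(V_k) = \bigcup_{j \ge 0} f^{jn}(U_{k+1}) = V_{k+1}$, so $f$ permutes the $V_k$ cyclically; and because $U_0 \subset f^n(U_0)$ forces $U_k \subset f^n(U_k)$, the sequence is $n$-periodic, $V_{k+n} = V_k$. In particular $\bigcap_{j \ge 0} V_j = V_0 \cap V_1 \cap \cdots \cap V_{n-1}$, which is nonempty by Proposition~\ref{proposicao16}.

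For the easy inclusion, note that every term $f^{jn}(U_0)$ with $j \ge 1$ lies in $f(X)$, while the $j = 0$ term satisfies $U_0 \subset f^n(U_0) \subset f(X)$; hence $V_0 \subset f(X)$.

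The heart of the argument is the reverse inclusion, and here I would exploit the nonempty common intersection. Fix a point $p \in \bigcap_{i=0}^{n-1} V_i$, which exists by the discussion above. By Proposition~\ref{prop.imagem.de.transitivo.como.iterados.de.ponto} applied to $p$, we may write $f(X) = \bigcup_{m \ge 1} f^m(p)$. Now comes the phase-alignment step: given $m \ge 1$, choose the index $j^* \in \{0, \dots, n-1\}$ with $j^* \equiv -m \pmod{n}$. Since $p \in V_{j^*}$ and $f^m(V_{j^*}) \subset V_{j^* + m}$, and since $j^* + m$ is a positive multiple of $n$, periodicity gives $V_{j^* + m} = V_0$. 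Therefore $f^m(p) \subset V_0$ for every $m \ge 1$, whence $f(X) = \bigcup_{m \ge 1} f^m(p) \subset V_0$. Combined with the easy inclusion this yields $V_0 = f(X)$.

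The main obstacle --- or rather the point where one must be careful --- is precisely the choice of $p$ in the common intersection $\bigcap_i V_i$ rather than an arbitrary point: a generic point of $V_i$ would only guarantee $f^m(p) \subset V_{i+m}$, and one cannot steer this into $V_0$ for all $m$ simultaneously. It is the nonemptiness established in Proposition~\ref{proposicao16} that makes the phase-alignment possible, so the real work of the proof has already been done there; what remains is the bookkeeping of indices modulo $n$ together with the cyclic and periodic relations for $\{V_k\}$.
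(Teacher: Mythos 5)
Your proof is correct and follows essentially the same route as the paper: both arguments hinge on fixing a point in the nonempty intersection $\bigcap_{j} V_j$ supplied by Proposition~\ref{proposicao16}, invoking Proposition~\ref{prop.imagem.de.transitivo.como.iterados.de.ponto}, and aligning indices modulo $n$. Your version is marginally cleaner in that it carries out the phase alignment directly via the relations $f(V_k)=V_{k+1}$ and $V_{k+n}=V_k$, whereas the paper unwinds everything to the sets $U_k$, but the underlying idea is identical.
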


\begin{proof}
Let $x \in f(X)$, and let $v \in V_0 \cap V_1 \cap \cdots \cap V_{n-1}$ be a point given by Proposition~\ref{proposicao16}. 
For each $j \in \{0, 1, \dots, n-1\}$, there exists an integer $a_j \equiv j \pmod{n}$ such that $v \in U_{a_j}$. 
Let  $k = \max\{a_j : 0 \le j \le n-1\}$. Then 
\[
v \in U_k \cap U_{k+1} \cap \cdots \cap U_{k+n-1}.
\]

Since $x \in f(X)$, Proposition~\ref{prop.imagem.de.transitivo.como.iterados.de.ponto} guarantees the existence of some integer $m \ge 1$ such that $x \in f^m(v)$. 
Hence 
\[
x \in U_{k+m} \cap U_{k+m+1} \cap \cdots \cap U_{k+m+n-1}.
\]
Among the indices $\{k+m, k+m+1, \dots, k+m+n-1\}$, there is one multiple of $n$; denote it by $jn$. 
Therefore $x \in U_{jn} \subset V_0$, and the result follows.
\end{proof}

\begin{theorem}\label{transitivomixing}
Let $f : X \to 2^X$ be an open set-valued map, where $X$ is compact and connected. 
Then the following statements are equivalent:
\begin{enumerate}
    \item $f$ is transitive;
    \item $f$ is mixing.
\end{enumerate}
\end{theorem}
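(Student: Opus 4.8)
The plan is to prove the two implications separately, with $(2)\Rightarrow(1)$ being immediate and $(1)\Rightarrow(2)$ carrying all the weight. For $(2)\Rightarrow(1)$, I simply observe that if $f$ is mixing then for nonempty open sets $A,B$ there is $n_0$ with $f^n(A)\cap B\neq\emptyset$ for every $n\geq n_0$; in particular $f^{n_0}(A)\cap B\neq\emptyset$, so $f$ is transitive. No use of connectedness is needed here.

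For $(1)\Rightarrow(2)$, assume $f$ is transitive and fix nonempty open sets $A,B\subset X$. First I record two facts about $f(X)=\bigcup_{x\in X}f(x)$: it is open, being a union of open sets, and it is dense in $X$ by Proposition~\ref{proposicaosobrejetor}. Consequently $A\cap f(X)$ is a nonempty open set, so I may choose a point in it and a small enough ball to produce an open set $U_0\subset A$ with $\overline{U_0}\subset f(X)$. Proposition~\ref{proposicao12} then yields an integer $n\geq 1$ such that $U_0\subset f^n(U_0)$, which is precisely the standing hypothesis of Propositions~\ref{proposicao16} and~\ref{proposicao17}.

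The crux of the argument is to upgrade the single-residue statement $V_0=\bigcup_{j\geq 0}f^{jn}(U_0)=f(X)$ of Proposition~\ref{proposicao17} into the analogous statement for every residue class. For each $k\in\{0,1,\dots,n-1\}$ set $U_k=f^k(U_0)$; this is again an open set, and applying $f^k$ to $U_0\subset f^n(U_0)$ gives $U_k\subset f^n(U_k)$. Hence $U_k$ satisfies the hypothesis of Proposition~\ref{proposicao17} in place of $U_0$, and I conclude
\[
V_k=\bigcup_{j\geq 0}f^{jn}(U_k)=\bigcup_{j\geq 0}f^{jn+k}(U_0)=f(X).
\]
This is the step I expect to be the main obstacle, since it is what replaces arithmetic progressions of iterates by all sufficiently large iterates; everything else is bookkeeping.

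To finish, pick $y\in B\cap f(X)$, which is nonempty by density. For each $k$ the equality $V_k=f(X)$ gives some $j_k$ with $y\in f^{j_kn+k}(U_0)$, and since $U_0\subset f^n(U_0)$ implies that the sets $f^{in+k}(U_0)$ increase with $i$, we get $y\in f^{in+k}(U_0)$ for all $i\geq j_k$. Let $N=\max_k j_k$ and $n_0=Nn$. Given any $m\geq n_0$, write $m=qn+k$ with $0\leq k<n$ and $q=\lfloor m/n\rfloor\geq N\geq j_k$; then $y\in f^m(U_0)\subset f^m(A)$, so $y\in f^m(A)\cap B$. Therefore $f^m(A)\cap B\neq\emptyset$ for all $m\geq n_0$, which is exactly mixing.
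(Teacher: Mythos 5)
Your proof is correct and follows essentially the same route as the paper's: produce $U_0\subset A$ with $\overline{U_0}\subset f(X)$, invoke Proposition~\ref{proposicao12} to get $U_0\subset f^n(U_0)$, use Proposition~\ref{proposicao17} to identify each union over a residue class of iterates with $f(X)$, and then combine the $n$ residue classes to hit $B$ for all large times. The only (welcome) difference is that you explicitly justify applying Proposition~\ref{proposicao17} to each $U_k$ via $U_k\subset f^n(U_k)$, a step the paper asserts without comment.
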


\begin{proof} The implication $(2) \Rightarrow (1)$ follows directly from the definitions, so it suffices to prove $(1) \Rightarrow (2)$.
Let $U, V \subset X$ be nonempty open sets. 
Choose an open set $U_0 \subset U$ such that $\overline{U_0} \subset U \cap f(X)$. 
By Proposition~\ref{proposicao12}, there exists $n \in \mathbb{N}$ with 
$U_0 \subset f^n(U_0)$.
Define $U_k = f^k(U_0)$ for $k \ge 1$. 
By Proposition~\ref{proposicao17}, we have
$$
f(X) = \bigcup_{k = 0}^{\infty} f^{kn}(U_0) 
     = \bigcup_{k = 0}^{\infty} f^{kn}(U_1)
     = \cdots 
     = \bigcup_{k = 0}^{\infty} f^{kn}(U_{n-1}).
$$

Since  $f(X)$ is dense (Proposition \ref{proposicaosobrejetor}), for each $j \in \{0, 1, \dots, n-1\}$ there exists $n_j \in \mathbb{N}$ such that
\[
f^{n_j n}(U_j) \cap V \neq \emptyset.
\]
Moreover, because $U_0 \subset f^n(U_0)$, it follows that $f^{kn}(U_j) \cap V \neq \emptyset$ for all $k \ge n_j$.

Let  $N = \max\{n_j : 0 \le j \le n-1\}$. If $m > Nn + (n-1)$, then there exist $j \in \{0, 1, \dots, n-1\}$ and $k \in \mathbb{N}$ such that $m - j = kn$ with $k \ge n_j$. 
Hence,
\[
f^m(U_0) \cap V  = f^{m-j}(U_j) \cap V  = f^{kn}(U_j) \cap V\neq \emptyset.
\]
Since $U_0 \subset U$, we have $f^m(U) \cap V \neq \emptyset$ for all $m$ sufficiently large, showing that $f$ is mixing.
\end{proof}

\begin{theorem}\label{teo.mixing2}
Let $f : X \to 2^X$ be an open set-valued map, and let $C \subset X$ be a nonempty connected open set such that $f(C) \subset C$. 
If the restriction $\restric{f}{C} : C \to 2^C$ is transitive, then it is mixing.
\end{theorem}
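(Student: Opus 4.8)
The plan is to reduce to Theorem~\ref{transitivomixing} by running its proof with $X$ replaced by the connected set $C$, the only genuine obstacle being that $C$ need not be compact. First I would check that $\restric{f}{C}$ is itself an open set-valued map on the connected space $C$: since $f(C)\subset C$, every value $f(x)$ with $x\in C$ lies in $C$ and is nonempty, open, and connected, while $\graf(\restric{f}{C})=\graf(f)\cap(C\times C)$ is open in $C\times C$. Because iterates never leave $C$, the iterates $f^n$ computed in the restricted system coincide with those computed in $X$.

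Next I would observe that the entire chain of auxiliary results leading up to Theorem~\ref{transitivomixing} — namely Propositions~\ref{prop.imagem.conexa}, \ref{umaclasse}, \ref{proposicaosobrejetor}, and \ref{prop.imagem.de.transitivo.como.iterados.de.ponto} — holds verbatim for $\restric{f}{C}$, since their proofs invoke only the openness of the graph, the connectedness of $C$ and of the values $f(x)$, and transitivity, never the compactness of the ambient space. In particular $f(C)$ is open (a union of open sets), connected, dense in $C$, equal to $\Omega_{\mathrm{final}}(\restric{f}{C})$, and forms a single equivalence class; its nonemptiness is automatic from the inclusion $f(C)\subset\Omega(\restric{f}{C})$ established in Proposition~\ref{umaclasse}, so no appeal to Zorn's Lemma is needed here.

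The sole place where compactness is genuinely used is Proposition~\ref{proposicao12}, where compactness of $\overline{U}$ furnishes a finite subcover. To recover this, given nonempty open sets $U,V\subset C$, I would choose an open set $U_0\subset U$ whose closure $\overline{U_0}$, taken in $X$, is compact and satisfies $\overline{U_0}\subset U\cap f(C)\subset C$. Such a choice is possible precisely because $f(C)$ is open and dense in $C$, so $U\cap f(C)$ is a nonempty open subset of $X$; picking a point of it and a small enough ball about that point gives a relatively compact $U_0$ whose closure remains inside $C$. With $\overline{U_0}$ compact and contained in $f(C)=\Omega_{\mathrm{final}}(\restric{f}{C})$, each of its points satisfies $x\rightsquigarrow x$, and the finite-subcover argument of Proposition~\ref{proposicao12} applies to yield $n\ge 1$ with $U_0\subset f^n(U_0)$.

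Finally, with this $U_0$ in hand, Propositions~\ref{proposicao16} and \ref{proposicao17} and the concluding argument of Theorem~\ref{transitivomixing} transfer word for word to $C$: writing $U_j=f^j(U_0)$ one checks $U_j\subset f^n(U_j)$, so Proposition~\ref{proposicao17} gives $f(C)=\bigcup_{k\ge 0}f^{kn}(U_j)$ for each $j\in\{0,\dots,n-1\}$, and since $f(C)$ is dense and $V\subset C$ is open, $f^{kn}(U_j)\cap V\neq\emptyset$ for all large $k$, whence $f^m(U)\cap V\neq\emptyset$ for all sufficiently large $m$. This shows $\restric{f}{C}$ is mixing. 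The main obstacle is the bookkeeping of the third paragraph — pinning down that compactness enters only through $\overline{U_0}$ and that this closure can be kept inside $C$ — which rests entirely on the openness and density of $f(C)$.
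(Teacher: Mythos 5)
Your argument is correct, but it takes a different route from the paper. The paper's proof is a short compactification: it first observes that $f(x)\subset C$ for every $x\in\partial C$ (if some value $f(x)$ escaped $C$, openness of the graph would force $f(y)\not\subset C$ for nearby $y\in C$, contradicting $f(C)\subset C$), so that $\restric{f}{\overline{C}}:\overline{C}\to 2^{\overline{C}}$ is a well-defined open set-valued map on the compact, connected space $\overline{C}$; it is still transitive because $C$ is dense in $\overline{C}$, so Theorem~\ref{transitivomixing} applies directly and mixing passes back down to $C$. You instead stay inside $C$ and re-run the proof of Theorem~\ref{transitivomixing} there, correctly identifying that compactness of the ambient space enters only through the finite-subcover step in Proposition~\ref{proposicao12}, and repairing it by shrinking $U_0$ until $\overline{U_0}$ is a compact subset of $U\cap f(C)$ --- which is possible exactly because $f(C)$ is open and dense in $C$ and $C$ is open in the compact space $X$. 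Both arguments are sound. The paper's is shorter but quietly requires checking that the extension to $\overline{C}$ remains an open, transitive set-valued map on the subspace $\overline{C}$; yours avoids any statement about boundary points and any extension, at the cost of auditing the whole chain Propositions~\ref{prop.imagem.conexa}, \ref{umaclasse}, \ref{proposicaosobrejetor}, \ref{prop.imagem.de.transitivo.como.iterados.de.ponto}, \ref{proposicao12}, \ref{proposicao16}, \ref{proposicao17} for compactness dependence --- an audit you carry out correctly, including the observation that nonemptiness of $\Omega_{\mathrm{final}}(\restric{f}{C})$ comes for free from transitivity rather than from Zorn's Lemma.
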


\begin{proof}
We first observe that if $x \in \partial C$, then $f(x) \subset C$. 
Indeed, suppose otherwise that $C \setminus f(x) \neq \emptyset$. 
By Proposition~\ref{prop.abertos}, for all $y$ sufficiently close to $x$, we would have $f(y) \not\subset C$, contradicting the assumption $f(C) \subset C$. 
Hence $f(x) \subset C$ for all $x \in \partial C$.

It follows that we may consider the restriction 
\[
\restric{f}{\overline{C}} : \overline{C} \to 2^{\overline{C}}.
\]
Since $\restric{f}{C}$ is transitive, so is $\restric{f}{\overline{C}}$. 
Because $X$ is compact, $\overline{C}$ is compact as well. 
Therefore, by Theorem~\ref{transitivomixing}, $\restric{f}{\overline{C}}$ is mixing. 
This implies that $\restric{f}{C}$ is also mixing.
\end{proof}

The following theorem brings together Propositions~5.1 and~5.3 and Theorem~5.1 from~\cite{duarte2006combinatorial}, 
to which we refer for complete proofs and further discussion. 
For convenience, we recall here the notation used in their statement.

\begin{definition}
Let $f : X \to 2^X$ be an open set-valued map. 
The collection of all final classes $[x]_{\Omega_{\mathrm{final}}}$ contained in $\Omega_{\mathrm{final}}(f)$ is denoted by $\Lambda^\Omega_{\mathrm{final}}(f)$. 
A connected component of a final class is called a \emph{final component}, and the set of all such components is denoted by $\Sigma^\Omega_{\mathrm{final}}(f)$.
\end{definition}

\begin{theorem}\label{teo.duarte}
Let $f : X \to 2^X$ be an open set-valued map. 
Then, the set of final classes $\Lambda^\Omega_{\mathrm{final}}(f)$ is finite, and each class $A \in \Lambda^\Omega_{\mathrm{final}}(f)$ satisfies $f(A) = A$. 
Moreover, the set of final components $\Sigma^\Omega_{\mathrm{final}}(f)$ is finite, and for every $C \in \Sigma^\Omega_{\mathrm{final}}(f)$ we have $f(C) \in \Sigma^\Omega_{\mathrm{final}}(f)$. 
The mapping
\[
\pi_f : \Sigma^\Omega_{\mathrm{final}}(f) \longrightarrow \Sigma^\Omega_{\mathrm{final}}(f), 
\qquad \pi_f(C) = f(C),
\]
is a bijection that cyclically permutes the components of each final class.
\end{theorem}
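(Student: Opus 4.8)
The plan is to prove the five assertions in sequence, deriving the two finiteness statements from a uniform lower bound on the size of classes and components supplied by Proposition~\ref{prop.grafico.epsilon}. First I would establish the invariance $f(A)=A$ for a final class $A=[x]_{\Omega_{\mathrm{final}}}$. For $f(A)\subset A$: if $x\in A$ and $y\in f(x)$ then $x\rightsquigarrow y$, and since $x\in\Omega_{\mathrm{final}}(f)$ this forces $y\rightsquigarrow x$, so $x\leftrightsquigarrow y$ and $y\in[x]_\Omega=A$ by Proposition~\ref{prop.classef.sao.abertas}. For the reverse inclusion, take $y\in A\subset\Omega(f)$; then $y\rightsquigarrow y$ via a finite trajectory $y=x_0,\dots,x_m=y$ with $m\ge1$, and the penultimate point $x_{m-1}$ satisfies $x_{m-1}\leftrightsquigarrow y$, hence lies in $A$, giving $y\in f(x_{m-1})\subset f(A)$.

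Next, to bound the number of final classes, I would invoke Proposition~\ref{prop.grafico.epsilon} to obtain a map $F$ and $\varepsilon_0>0$ with $B_{\varepsilon_0}(\graf(F))\subset\graf(f)$, hence $B_{\varepsilon_0}(F(x))\subset f(x)$ for every $x$. Since each final class is open and satisfies $A=f(A)\supset f(x)\supset B_{\varepsilon_0}(F(x))$ for any $x\in A$, every class contains a ball of radius $\varepsilon_0$; as distinct classes are disjoint, their centers are $\varepsilon_0$-separated in the compact space $X$ and therefore finite in number. The same packing argument will handle components once I know each component is open and likewise contains such a ball.

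For the components, writing $A=f(A)=\bigcup_{x\in A}f(x)$ with each $f(x)$ open and connected, every connected component $C$ of $A$ is the union of those $f(x)$ it contains—so $C$ is open and again encloses a ball $B_{\varepsilon_0}(F(x))$—which yields finiteness of $\Sigma^\Omega_{\mathrm{final}}(f)$. To define $\pi_f$ I must first show $f(C)$ is connected whenever $C$ is: I would repeat the argument of Proposition~\ref{prop.imagem.conexa}, splitting a hypothetical separation $f(C)\subset A'\sqcup B'$ into the open sets $\{x\in C:f(x)\subset A'\}$ and $\{x\in C:f(x)\subset B'\}$, which cover $C$ by connectedness of each $f(x)$ and would separate $C$ unless one is empty. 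Thus $f(C)$ lies in a single component, defining a self-map $\sigma$ of the finite set of components of $A$; since $A=f(A)=\bigcup_C f(C)$ and the components are disjoint, every component must be hit, forcing $\sigma$ to be a bijection with $f(C)=C_{\sigma(C)}$, which is exactly $\pi_f$.

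Finally, cyclicity on each class follows from transitivity: by Proposition~\ref{restritoclassetransitivo} the restriction $\restric{f}{A}$ is transitive, so were $\sigma$ to decompose into more than one cycle I could take nonempty open $U,V$ inside components belonging to distinct cycles and observe that $f^n(U)$ never leaves the components of $U$'s cycle, whence $f^n(U)\cap V=\emptyset$ for all $n$—contradicting transitivity. The main obstacle I anticipate is the finiteness mechanism: the essential trick is to convert compactness into the uniform radius $\varepsilon_0$ of Proposition~\ref{prop.grafico.epsilon} and to check that components of the merely-open class are themselves open, since this is what legitimizes the $\varepsilon_0$-separation count; the connectedness of $f(C)$ is the other delicate point, as it is precisely what makes $\pi_f$ well defined.
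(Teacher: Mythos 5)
Your proof is correct, but there is nothing in the paper to compare it against: the authors do not prove Theorem~\ref{teo.duarte} at all, they import it wholesale from Duarte--Torres (Propositions~5.1, 5.3 and Theorem~5.1 of \cite{duarte2006combinatorial}). What you have written is therefore a self-contained substitute, and it hangs together. The two load-bearing ideas --- (i) the uniform radius $\varepsilon_0$ from Proposition~\ref{prop.grafico.epsilon} turning disjointness of classes (and of components, once you observe that a component of $A=\bigcup_{x\in A}f(x)$ is a union of the connected open sets $f(x)$ it meets, hence open and containing a ball $B_{\varepsilon_0}(F(x))$) into an $\varepsilon_0$-packing bound, and (ii) the counting step $C'=\bigcup_{\sigma(C)=C'}f(C)$, which upgrades ``$f(C)$ is contained in a component'' to ``$f(C)$ \emph{is} a component'' and gives bijectivity for free from surjectivity on a finite set --- are both sound, and the connectedness of $f(C)$ is correctly reduced to the argument of Proposition~\ref{prop.imagem.conexa} run inside $C$. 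The cyclicity-from-transitivity step via Proposition~\ref{restritoclassetransitivo} is also fine: $f^n(U)\subset\sigma^n(C_1)$ stays in the $\sigma$-orbit of $C_1$, so two distinct cycles would violate transitivity of $\restric{f}{A}$. This packing-plus-combinatorics route is in the same spirit as Duarte--Torres, so you have not found a shortcut, but you have made the citation dispensable.

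One point worth making explicit: your reverse inclusion $A\subset f(A)$ extracts a penultimate point from a chain witnessing $y\rightsquigarrow y$, which silently assumes every such chain has length $m\ge 1$. This is the intended convention (otherwise $x\rightsquigarrow x$ would hold vacuously and $\Omega(f)=X$ always), and it is how the paper uses $\rightsquigarrow$ elsewhere (e.g.\ in Proposition~\ref{umaclasse}, where $y\rightsquigarrow y$ is manufactured from a genuine nontrivial cycle), but since the paper's definition of $\rightsquigarrow$ does not exclude $n=0$ you should state the convention before relying on it.
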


The preceding theorem describes the finite cyclic structure of the final components of an open set-valued map. 
Building upon this result  and Theorem \ref{transitivomixing}, we obtain a decomposition of the final recurrent set into finitely many transitive and mixing subsystems, 
which play the role of a \emph{spectral decomposition} for $f$.

\begin{thmB}[Spectral Decomposition]
Let $f : X \to 2^X$ be an open set-valued map. 
Then $\Omega_{\mathrm{final}}(f)$ can be decomposed into a finite collection of pairwise disjoint open sets 
\[
\Lambda_1, \Lambda_2, \dots, \Lambda_n,
\]
and each $\Lambda_j$ can be further decomposed into finitely many pairwise disjoint, open, and connected sets
\[
\Sigma_1^j, \Sigma_2^j, \dots, \Sigma_{n_j}^j.
\]
This decomposition satisfies:
\begin{enumerate}
    \item $f(\Lambda_j) = \Lambda_j$ for each $j = 1, \dots, n$;
    \item $f(\Sigma_k^j) = \Sigma_{k+1}^j$ for $k = 1, \dots, n_j - 1$, and $f(\Sigma_{n_j}^j) = \Sigma_1^j$;
    \item the restriction $\restric{f}{\Lambda_j}$ is transitive for every $j = 1, \dots, n$;
    \item the restriction $\restric{f^{n_j}}{\Sigma_k^j}$ is mixing for every $j = 1, \dots, n$ and $k = 1, \dots, n_j$.
\end{enumerate}
\end{thmB}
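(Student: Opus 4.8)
The plan is to take the $\Lambda_j$ to be the final classes $[x]_{\Omega_{\mathrm{final}}}$ and the $\Sigma_k^j$ to be their connected components, and then read off properties (1)--(4) from the results already assembled. By Theorem~\ref{teo.duarte} there are only finitely many final classes, they are pairwise disjoint (being distinct equivalence classes of $\leftrightsquigarrow$), and each satisfies $f(\Lambda_j)=\Lambda_j$; by Proposition~\ref{prop.classef.sao.abertas} each is open. This gives the first decomposition together with property~(1). For the finer decomposition, Theorem~\ref{teo.duarte} also guarantees that each $\Lambda_j$ has finitely many final components and that $f$ cyclically permutes them; labeling one cycle as $\Sigma_1^j,\dots,\Sigma_{n_j}^j$ yields property~(2). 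Since $\Lambda_j$ is open and has only finitely many connected components, each component is clopen in $\Lambda_j$ and therefore open in $X$; the $\Sigma_k^j$ are thus pairwise disjoint, open, and connected, as required.

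Property~(3) is immediate from Proposition~\ref{restritoclassetransitivo}, which asserts that $\restric{f}{[x]_{\Omega_{\mathrm{final}}}}$ is transitive.

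The substance of the argument is property~(4). First I would check that the iterate $g:=f^{n_j}$ is again an open set-valued map: its graph is open by iterating Proposition~\ref{prop.abertos}, and its values are connected because the image of a connected set under $f$ is connected (the argument of Proposition~\ref{prop.imagem.conexa} applies verbatim to any connected subset, so $f^{n_j}(x)$ is connected by induction). Since $f$ permutes the $n_j$ components of $\Lambda_j$ in a single cycle, we have $g(\Sigma_k^j)=f^{n_j}(\Sigma_k^j)=\Sigma_k^j$, so $\Sigma_k^j$ is connected, open, and $g$-invariant. The key step is to show that $\restric{g}{\Sigma_k^j}$ is transitive. Given nonempty open $U,V\subset\Sigma_k^j$, these are open in $\Lambda_j$, so transitivity of $\restric{f}{\Lambda_j}$ yields some $N$ with $f^N(U)\cap V\neq\emptyset$. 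But $f^N(U)\subset f^N(\Sigma_k^j)=\Sigma_{k+N}^j$ (indices mod $n_j$), while $V\subset\Sigma_k^j$; since the components are pairwise disjoint this forces $N\equiv 0 \pmod{n_j}$, say $N=mn_j$, whence $g^m(U)\cap V=f^{N}(U)\cap V\neq\emptyset$. Thus $\restric{g}{\Sigma_k^j}$ is transitive. Finally, applying Theorem~\ref{teo.mixing2} to the open set-valued map $g$ and the connected open invariant set $C=\Sigma_k^j$ upgrades transitivity to mixing, which is exactly property~(4).

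The main obstacle I anticipate is not any single computation but the two verifications feeding into Theorem~\ref{teo.mixing2}: that $f^{n_j}$ genuinely satisfies the definition of an open set-valued map (so the theorem applies), and the modular bookkeeping that forces every first-hit time between subsets of a single component to be a multiple of $n_j$. Both hinge on the cyclic structure supplied by Theorem~\ref{teo.duarte} and on the preservation of openness and connectedness under iteration; once these are in place, properties~(1)--(4) follow by direct citation of the earlier results.
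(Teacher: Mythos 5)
Your proposal is correct and follows essentially the same route as the paper: identify the $\Lambda_j$ with the final classes and the $\Sigma_k^j$ with their components via Theorem~\ref{teo.duarte}, cite Proposition~\ref{restritoclassetransitivo} for transitivity, and combine the cyclic permutation of components with Theorem~\ref{teo.mixing2} to get mixing. The only difference is that you spell out two steps the paper leaves implicit --- that $f^{n_j}$ is itself an open set-valued map and the modular argument forcing return times between subsets of one component to be multiples of $n_j$ --- both of which are correct and worth making explicit.
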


\begin{proof}
The existence of the above decomposition and properties~(1)–(2) follow directly from Theorem~\ref{teo.duarte}, 
where each $\Lambda_j$ corresponds to a final class of $\Omega_{\mathrm{final}}(f)$, 
and each $\Sigma_k^j$ corresponds to a final component within that class. 
Property~(3) is a consequence of Proposition~\ref{restritoclassetransitivo}. 

To establish property~(4), note that the transitivity of $\restric{f}{\Lambda_j}$ together with property~(2) 
implies that $\restric{f^{n_j}}{\Sigma_k^j}$ is transitive. 
Then, by Theorem~\ref{teo.mixing2}, each $\restric{f^{n_j}}{\Sigma_k^j}$ is mixing.
\end{proof}

\section{Topological entropy for open set-valued maps} \label{entropia}
In this section, we study the topological entropy of open set-valued maps. The notion we adopt is parallel to the classical definition of topological entropy for single-valued maps and was developed in \cite{cordeiro2016continuum}.
For completeness, we briefly recall the basic definitions and refer to these references for additional details.

Throughout, let $X$ be a compact metric space, and let $f : X \to 2^X$ be an open set-valued map.  We denote by $\orb(f)$ the set of all forward trajectories of $f$, that is,
\[
\orb(f)
= \Big\{ (x_i)_{i \ge 1} : x_{i+1} \in f(x_i) \text{ for all } i \ge 1 \Big\}.
\]
For each $n \in \mathbb{N}$, we denote by $\orb_n(f)$ the set of all trajectories of length $n$:
\[
\orb_n(f)
= \Big\{ (x_1,\dots,x_{n}) : x_{i+1} \in f(x_i) \text{ for } i = 1,\dots,n-1 \Big\}.
\]

\begin{definition}
A set $E \subset \orb_n(f)$ is called \emph{$(n,\varepsilon)$-separated} if, for any two distinct trajectories 
\[
(x_1,\dots,x_{n}),\ (y_1,\dots,y_{n}) \in E,
\]
there exists $1 \le i \le n$ such that $\operatorname{dist}(x_i,y_i) \ge \varepsilon$. 
We denote by $s_n(\varepsilon)$ the maximal cardinality of an $(n,\varepsilon)$-separated subset of $\orb_n(f)$.
\end{definition}

\begin{definition}
The \emph{topological entropy} of $f : X \to 2^X$ is defined by
\[
h_{\mathrm{top}}(f)
= \lim_{\varepsilon \to 0} \ \limsup_{n \to \infty} \frac{1}{n} \log s_n(\varepsilon).
\]
\end{definition}

\begin{remark}
In complete analogy with the classical case, the topological entropy can also be defined in terms of spanning sets. 
A set $E \subset \orb_n(f)$ is said to be \emph{$(n,\varepsilon)$-spanning} if for every 
\[
(x_1,\dots,x_{n}) \in \orb_n(f)
\]
there exists $(y_1,\dots,y_{n}) \in E$ such that $\operatorname{dist}(x_i,y_i) < \varepsilon$ for all $1 \le i \le n$. 
Let $r_n(\varepsilon)$ denote the minimal cardinality of an $(n,\varepsilon)$-spanning set. 
Then one also has
\[
h_{\mathrm{top}}(f)
= \lim_{\varepsilon \to 0} \ \limsup_{n \to \infty} \frac{1}{n} \log r_n(\varepsilon),
\]
which follows from the standard inequalities
\[
r_n(\varepsilon) \ \le\ s_n(\varepsilon) \ \le\ r_n(\varepsilon/2).
\]
\end{remark}


The next examples show that an open set-valued map may have zero or positive topological entropy.

First, let $X$ be a finite set and define $f : X \to 2^X$ by $f(x) = \{x\}$ for all $x \in X$. 
In this case every orbit is constant, so for $\varepsilon > 0$ sufficiently small the quantity $s_n(\varepsilon)$ is equal to  the cardinality of $X$ for all $n$. 
Hence $\htop(f) = 0$.

We can also obtain positive entropy in a very simple way. 
Let $X = \{a,b\}$ with $\operatorname{dist}(a,b) = \varepsilon_0$ and define
$f(a) = f(b) = \{a,b\}$.
Then, for every $\varepsilon < \varepsilon_0$, each sequence $(x_0,\dots,x_{n-1}) \in \{a,b\}^n$ is an admissible orbit of $f$, and any two distinct such sequences are $(n,\varepsilon)$-separated. 
Therefore $s_n(\varepsilon) = 2^n$ and $\htop(f) = \log 2$.

A common feature of both examples is that $f(x)$ is finite for every $x \in X$. 
In fact, if $f$ is an open set-valued map such that $f(x)$ is infinite for all points, then $\htop(f) = \infty$. 
This is precisely what happens, for example, when $X$ has no isolated points, and it is the content of Theorem \hyperlink{teoC}{C}.
The key idea is that one can select sufficiently many well-separated points inside each set $f(x)$, as made precise in Proposition~\ref{prop:preparacao.entropia}; 
Lemmas~\ref{lem.1} and~\ref{lem.2} provide the technical preparation for that result.

\begin{lemma}\label{lem.1}
Let $M$ be a connected metric space, and let $a,b \in M$ be points satisfying $\operatorname{dist}(a,b) \ge 2\varepsilon_0$ for some $\varepsilon_0 > 0$.  
Then, for every $x_0 \in M$ and every $\varepsilon \in [0,\varepsilon_0]$,  
the boundary 
\[
\partial B_\varepsilon(x_0) = \{\, x \in M : \operatorname{dist}(x, x_0) = \varepsilon \,\}
\]
is nonempty.
\end{lemma}

\begin{proof}
Fix $x_0 \in M$ and $\varepsilon \in [0,\varepsilon_0]$.  
The ball $B_\varepsilon(x_0)$ cannot contain both $a$ and $b$, so the set  
$\{\, x \in M : \operatorname{dist}(x,x_0) \ge \varepsilon \,\}$  
is nonempty.  
If $\partial B_\varepsilon(x_0)$ were empty, then
\[
M = B_\varepsilon(x_0) \cup \{\, x \in M : \operatorname{dist}(x,x_0) > \varepsilon \,\},
\]
which contradicts the connectedness of $M$.  
Hence $\partial B_\varepsilon(x_0) \neq \emptyset$.
\end{proof}

\begin{lemma}\label{lem.2}
Let $M$ be a compact metric space without isolated points.  
Then, for every $\varepsilon > 0$, there exists $r > 0$ such that, for every $x_0 \in M$, the set
\[
\{\, x \in M : r < \operatorname{dist}(x,x_0) < \varepsilon \,\}
\]
is nonempty.
\end{lemma}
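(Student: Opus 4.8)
```latex
The plan is to prove the contrapositive of a uniform-gap statement by a compactness-and-contradiction argument. The goal is to produce, for a fixed $\varepsilon > 0$, a single radius $r > 0$ that works simultaneously for \emph{all} centers $x_0 \in M$. First I would establish the corresponding statement for each fixed center: for every $x_0 \in M$ and every $\varepsilon > 0$, the annulus $\{\, x : r < \operatorname{dist}(x,x_0) < \varepsilon \,\}$ is nonempty for all sufficiently small $r$. This is where the absence of isolated points enters: since $x_0$ is not isolated, there are points of $M$ arbitrarily close to $x_0$ other than $x_0$ itself, so for each $x_0$ we can find some point at distance strictly between $0$ and $\varepsilon$; choosing $r$ below that distance makes the annulus nonempty for that particular center.

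The main obstacle is upgrading this pointwise conclusion to a \emph{uniform} radius $r$ independent of $x_0$, and this is exactly what compactness is for. I would argue by contradiction: suppose no single $r$ works for all centers. Then for each $n \in \mathbb{N}$, taking $r = 1/n$, there exists a center $x_n \in M$ for which the annulus $\{\, x : 1/n < \operatorname{dist}(x,x_n) < \varepsilon \,\}$ is empty. By compactness of $M$, pass to a subsequence so that $x_n \to x_* $ for some $x_* \in M$. The emptiness of each annulus says that for every $x \in M$, either $\operatorname{dist}(x,x_n) \le 1/n$ or $\operatorname{dist}(x,x_n) \ge \varepsilon$; in other words, $M$ meets the punctured ball of radius $\varepsilon$ around $x_n$ only inside the tiny ball of radius $1/n$.

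Now I would derive a contradiction at the limit point $x_*$. Since $x_*$ is not isolated, there exists a point $z \in M$ with $0 < \operatorname{dist}(z, x_*) < \varepsilon$; fix such a $z$ and set $d = \operatorname{dist}(z,x_*)$, so $0 < d < \varepsilon$. Choose $n$ large enough that $x_n$ is within $\min\{d/2,\, (\varepsilon - d)/2\}$ of $x_*$ and also $1/n < d/2$. By the triangle inequality, $\operatorname{dist}(z,x_n) \ge d - \operatorname{dist}(x_*,x_n) > d/2 > 1/n$, so $z$ lies outside the ball of radius $1/n$ around $x_n$; and $\operatorname{dist}(z,x_n) \le d + \operatorname{dist}(x_*,x_n) < d + (\varepsilon - d)/2 < \varepsilon$, so $z$ lies inside the ball of radius $\varepsilon$. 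Hence $z$ belongs to the annulus $\{\, x : 1/n < \operatorname{dist}(x,x_n) < \varepsilon \,\}$, contradicting its emptiness. This contradiction shows that a uniform $r$ must exist, completing the proof.
```
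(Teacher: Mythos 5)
Your proof is correct, but it takes a genuinely different route from the paper's. You argue by contradiction with sequential compactness: assuming no uniform $r$ exists, you extract centers $x_n$ whose annuli $\{\,x : 1/n < \operatorname{dist}(x,x_n) < \varepsilon\,\}$ are empty, pass to a limit $x_* = \lim x_n$, and use the non-isolatedness of $x_*$ to exhibit a point $z$ that must lie in one of those annuli for large $n$ --- your triangle-inequality estimates with the threshold $\min\{d/2, (\varepsilon-d)/2\}$ check out. The paper instead gives a direct covering argument: for each $x$ it picks a witness $y \in B_\varepsilon(x)$ with $y \neq x$, chooses $r_x < \min\{\varepsilon - \operatorname{dist}(x,y),\ \operatorname{dist}(x,y)/2\}$ so that the \emph{same} point $y$ lies in the annulus around every center $x_0 \in B_{r_x}(x)$, and then takes $r$ to be the minimum of the $r_x$ over a finite subcover. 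The two are essentially dual uses of compactness. The paper's version is constructive (it produces $r$ explicitly from a finite subcover) and reuses the same open-cover template that appears elsewhere in the paper, e.g.\ in Proposition~\ref{prop.grafico.epsilon} and Proposition~\ref{proposicao12}; your version is non-constructive but arguably quicker to verify, since the local witness and the fiddly choice of $r_x$ are replaced by a single limit point. Both are complete proofs of the lemma.
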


\begin{proof}
Fix $\varepsilon > 0$.  
For each $x \in M$, the ball $B_\varepsilon(x)$ contains infinitely many points.  
Choose any $y \in B_\varepsilon(x)$ with $y \ne x$, and define
\[
r_x < \min\!\left\{\, \varepsilon - \operatorname{dist}(x,y), \, \frac{\operatorname{dist}(x,y)}{2} \,\right\}.
\]
Then, for every $x_0 \in B_{r_x}(x)$,
\[
r_x 
< \operatorname{dist}(x,y) - \operatorname{dist}(x,x_0)
\le \operatorname{dist}(x_0,y)
\le \operatorname{dist}(x,y) + \operatorname{dist}(x,x_0)
< \varepsilon.
\]
Hence $\operatorname{dist}(x_0,y) \in (r_x, \varepsilon)$, which proves that the set above is nonempty for every $x_0 \in B_{r_x}(x)$.  
The family $\{ B_{r_x}(x) \}_{x \in M}$ forms an open cover of $M$.  
By compactness, there exists a finite subcover $\{ B_{r_x}(x) \}_{x \in I}$.  
Setting $r = \min\{ r_x : x \in I \}$ yields the desired result.
\end{proof}

\begin{proposition}\label{prop:preparacao.entropia}
Let $X$ be a connected metric space without isolated points, and let $f : X \to 2^X$ be an open set-valued map.  
Given $m \in \mathbb{N}$, there exists $\varepsilon_0 > 0$ such that, for every $x \in X$, one can find points $x_1, \dots, x_m \in f(x)$ satisfying 
\[
\operatorname{dist}(x_i, x_j) > \varepsilon_0 
\quad \text{for all } 1 \le i < j \le m.
\]
\end{proposition}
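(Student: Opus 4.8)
The plan is to isolate the only genuine difficulty, which is \emph{uniformity in $x$}. For a fixed $x$ the set $f(x)$ is open, nonempty, and (since $X$ has no isolated points) infinite, so producing $m$ mutually separated points inside it is easy; the content of the statement is that a \emph{single} $\varepsilon_0$ serves all $x$ at once. I would handle this in two stages: first produce a uniform ball sitting inside every $f(x)$, and then place $m$ points at well-separated distances from the centre of that ball, invoking the two preparatory lemmas to realise the prescribed distances uniformly in the centre.

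First I would extract a uniform inner ball. For each $x$ choose $y_x \in f(x)$; by Proposition~\ref{prop.abertos} there is $\delta_x > 0$ with $B_{\delta_x}(y_x) \subset f(\tilde x)$ for every $\tilde x \in B_{\delta_x}(x)$. The balls $\{B_{\delta_x}(x)\}_{x \in X}$ cover the compact space $X$ (compactness being the standing hypothesis of this section), so a finite subcover yields a single $\delta > 0$ such that every $f(x)$ contains some ball $B_\delta(y_x)$. This is the step that converts the qualitative ``$f(x)$ is large'' into a quantitative, uniform lower bound, and it is exactly where compactness is indispensable: without it $f(x)$ could shrink to arbitrarily small diameter as $x$ varies and no uniform $\varepsilon_0$ could exist. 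Since $X$ has no isolated points it contains more than one point, so $\operatorname{diam}(X) > 0$, and I may freely shrink $\delta$ (a smaller ball still lies inside $f(x)$) so that $\delta \le \operatorname{diam}(X)/2$.

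Next I would place the points by distance from the centre. Set $\rho_k = \frac{k}{m+1}\,\delta$ for $k = 1, \dots, m$ and $\varepsilon_0 = \frac{\delta}{2(m+1)}$, so that $0 < \rho_1 < \cdots < \rho_m < \delta$ with consecutive gaps equal to $\frac{\delta}{m+1} > \varepsilon_0$. Fixing $x$ and writing $y = y_x$, I want a point at distance exactly $\rho_k$ from $y$ for each $k$. Because $\delta \le \operatorname{diam}(X)/2$ forces $2\rho_m < \operatorname{diam}(X)$, there are $a,b \in X$ with $\operatorname{dist}(a,b) \ge 2\rho_m$, so Lemma~\ref{lem.1} (whose conclusion holds \emph{for every centre}, hence uniformly in $x$) gives $\partial B_{\rho_k}(y) \neq \emptyset$; pick $z_k \in \partial B_{\rho_k}(y)$. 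Each $z_k$ satisfies $\operatorname{dist}(z_k, y) = \rho_k < \delta$, whence $z_k \in B_\delta(y) \subset f(x)$, and for $i < j$ the triangle inequality yields $\operatorname{dist}(z_i, z_j) \ge \rho_j - \rho_i \ge \frac{\delta}{m+1} > \varepsilon_0$. Thus $z_1, \dots, z_m \in f(x)$ are pairwise $\varepsilon_0$-separated with $\varepsilon_0$ depending only on $\delta$ and $m$, proving the claim. If one prefers to select from open conditions rather than from exact spheres, Lemma~\ref{lem.2} supplies the analogous uniform statement for open annuli $\{\, r < \operatorname{dist}(\cdot, y) < \varepsilon \,\}$, and the same distance-gap computation applies verbatim.

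The main obstacle is precisely the passage from pointwise to uniform. Everything in the second stage is automatic once one has (i) the uniform ball radius $\delta$ and (ii) the \emph{uniform-in-the-centre} existence of points at prescribed distances. Point (i) is the crux and rests on compactness together with Proposition~\ref{prop.abertos}; point (ii) is exactly the content of Lemmas~\ref{lem.1} and~\ref{lem.2}, whose hypotheses—connectedness and the absence of isolated points—are the reason these two topological assumptions on $X$ enter the statement.
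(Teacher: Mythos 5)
Your proof is correct and follows the same skeleton as the paper's: a compactness argument producing a uniform radius $\delta$ with a ball $B_\delta(y_x)\subset f(x)$ for every $x$ (your first stage is essentially a re-proof of Proposition~\ref{prop.grafico.epsilon}), followed by Lemma~\ref{lem.1} to place points on spheres of radii $\tfrac{k}{m+1}\delta$ and a triangle-inequality estimate. The one genuine difference is where you invoke Lemma~\ref{lem.1}: you apply it in the ambient space $M=X$ (connected by hypothesis), obtaining the separated pair $a,b$ from $\operatorname{diam}(X)>0$ after shrinking $\delta$, and then observe that the chosen points land in $f(x)$ simply because they lie within $\delta$ of the centre $y_x$. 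The paper instead applies Lemma~\ref{lem.1} inside $M=f(x)$, which forces it to first guarantee, uniformly in $x$, two points of $f(x)$ at distance at least $r$ apart --- and that is exactly what Lemma~\ref{lem.2} is for. Your route therefore bypasses Lemma~\ref{lem.2} entirely and is slightly more economical; the paper's route has the advantage of localizing the whole construction inside $f(x)$, which is the version that survives when one only assumes each $f(x)$ open (not $X$ connected), as in the remark following Theorem~C.
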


\begin{proof}
Let $\varepsilon_1 > 0$ be as given by Proposition~\ref{prop.grafico.epsilon}.  
Applying Lemma~\ref{lem.2} to $\varepsilon = \varepsilon_1$ (with $M = X$), let $r > 0$ be the corresponding constant.  
This guarantees that $f(x)$ contains at least two points whose distance is greater than or equal to $r$.  
Since $f(x)$ is connected, Lemma~\ref{lem.1} can be applied within $f(x)$.

Fix $m \in \mathbb{N}$ and define
\[
\varepsilon_2 = \min\!\left\{\varepsilon_1, \frac{r}{2}\right\},
\qquad 
\text{and choose } \varepsilon_0 < \frac{\varepsilon_2}{m+1}.
\]
For each $x \in X$, Proposition~\ref{prop.grafico.epsilon} ensures the existence of a point $x_0 \in X$ such that 
\[
B_{\varepsilon_2}(x_0) \subset f(x).
\]
By Lemma~\ref{lem.1}, for each $j = 1, \dots, m$ there exists $x_j \in B_{\varepsilon_2}(x_0)$ such that 
\[
\operatorname{dist}(x_0, x_j) = \frac{j\,\varepsilon_2}{m+1}.
\]
Using the triangle inequality, for all $1 \le i < j \le m$ we have
\[
\operatorname{dist}(x_i, x_j)
\ge \operatorname{dist}(x_0, x_j) - \operatorname{dist}(x_0, x_i)
= \frac{(j-i)\,\varepsilon_2}{m+1}
> \varepsilon_0.
\]
This completes the proof.
\end{proof}

\begin{thmC} \hypertarget{teoC}{}
Let $X$ be a compact metric space without isolated points, and let $f : X \to 2^X$ be an open set-valued map. 
Then the topological entropy of $f$ is infinite.
\end{thmC}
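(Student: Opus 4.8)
The plan is to turn Proposition~\ref{prop:preparacao.entropia} into an exponential lower bound on the number of separated trajectories, with an exponential rate that can be made arbitrarily large. A preliminary remark is in order, since Theorem~\hyperlink{teoC}{C} assumes only that $X$ is compact without isolated points, whereas Proposition~\ref{prop:preparacao.entropia} is stated for connected $X$. The point is that the proof of that proposition invokes connectedness only through the connectedness of each value $f(x)$, which holds by definition of an open set-valued map, together with the compactness and absence of isolated points of $X$ (used via Lemmas~\ref{lem.1} and~\ref{lem.2}). Hence its conclusion remains available here: for each $m$ there is a single $\varepsilon_0 > 0$ so that every $f(x)$ contains $m$ points that are pairwise $\varepsilon_0$-separated.

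Fixing $m \in \mathbb{N}$ and the corresponding $\varepsilon_0 > 0$, I would build an $m$-ary tree of trajectories. Start from any $y_1 \in X$; having chosen $y_1, \dots, y_k$, the set $f(y_k)$ contains $m$ pairwise $\varepsilon_0$-separated points, and I let $y_{k+1}$ range over these. Iterating up to length $n$ produces a family of $m^{n-1}$ trajectories $(y_1,\dots,y_n) \in \orb_n(f)$. The crucial claim is that this family is $(n,\varepsilon_0)$-separated: given two distinct trajectories, let $k$ be the first coordinate at which they differ. Their first $k-1$ coordinates coincide, so at step $k$ both continuations are drawn from the \emph{same} $\varepsilon_0$-separated family inside $f(y_{k-1})$, and being distinct members of it they satisfy $\operatorname{dist}(y_k,y_k') > \varepsilon_0$. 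Thus every pair of distinct trajectories in the family is separated at some coordinate, giving $s_n(\varepsilon_0) \ge m^{n-1}$ for all $n$, and therefore $\limsup_{n\to\infty}\frac{1}{n}\log s_n(\varepsilon_0) \ge \log m$.

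To finish, I would pass to the entropy using monotonicity. Since $s_n(\varepsilon)$ is non-increasing in $\varepsilon$, the inner $\limsup$ is non-decreasing as $\varepsilon \to 0$, so $\lim_{\varepsilon\to 0}\limsup_{n\to\infty}\frac{1}{n}\log s_n(\varepsilon) \ge \limsup_{n\to\infty}\frac{1}{n}\log s_n(\varepsilon_0) \ge \log m$. As $m$ was arbitrary, this yields $\htop(f) \ge \log m$ for every $m \in \mathbb{N}$, whence $\htop(f) = \infty$.

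The main obstacle, and the reason Proposition~\ref{prop:preparacao.entropia} carries the weight of the argument, is the \emph{uniformity} of the separation constant $\varepsilon_0$ over all image sets $f(x)$ at once. If $\varepsilon_0$ were allowed to depend on the base point, the first-divergence argument would collapse: different branches could separate at scales shrinking along the trajectory, and the family would fail to be $(n,\varepsilon_0)$-separated for any single fixed $\varepsilon_0$, so the lower bound $s_n(\varepsilon_0) \ge m^{n-1}$ would be lost. A uniform $\varepsilon_0$ is precisely what lets one fixed scale detect every branching, converting the combinatorial $m$-ary branching into a genuine exponential growth rate for $s_n(\varepsilon_0)$.
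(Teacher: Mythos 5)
Your proposal is correct and follows essentially the same route as the paper: both use Proposition~\ref{prop:preparacao.entropia} to grow an $m$-ary tree of trajectories branching at a uniform scale $\varepsilon_0$, deduce $s_n(\varepsilon_0)\ge m^{n-1}$ (the paper counts $m^n$) from a first-divergence separation argument, and let $m\to\infty$. Your preliminary remark that the connectedness hypothesis of Proposition~\ref{prop:preparacao.entropia} is only used through the connectedness of each value $f(x)$ is a worthwhile observation, since it reconciles that proposition's hypotheses with those of Theorem~\hyperlink{teoC}{C}, which does not assume $X$ connected.
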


\begin{proof}
Fix $m \in \mathbb{N}$. 
We will show that, for all sufficiently small $\varepsilon > 0$, one has $s_n(\varepsilon) \ge m^n$ for every $n \in \mathbb{N}$, 
where $s_n(\varepsilon)$ denotes the maximal cardinality of an $(n,\varepsilon)$-separated set for $f$. 
This will imply that $h_{\mathrm{top}}(f) = \infty$.

Let $\varepsilon_0 > 0$ be as given by Proposition~\ref{prop:preparacao.entropia}, and fix $n \in \mathbb{N}$. 
For each sequence $(r_1, \dots, r_n)$ with $1 \le r_j \le m$, we will construct an orbit of $f$. 
There are $m^n$ such sequences in total.

Choose any point $x \in X$. 
By Proposition~\ref{prop:preparacao.entropia}, we can find $x_1, \dots, x_m \in f(x)$ such that 
$\operatorname{dist}(x_i, x_j) > \varepsilon_0$
for all $i \ne j$. 
If the first element of the sequence is $r_1$, we set the first point of the orbit to be $x_{r_1}$. 
Next, applying Proposition~\ref{prop:preparacao.entropia} to the point $x_{r_1}$, we obtain points 
$x_{r_1 1}, \dots, x_{r_1 m} \in f(x_{r_1})$
again mutually separated by more than $\varepsilon_0$. 
If $r_2$ is the second term of the sequence, we define the second point of the orbit as $x_{r_1 r_2}$. 
Proceeding inductively, we construct $m^n$ distinct orbits corresponding to all possible sequences $(r_1, \dots, r_n)$.

By construction, the set $E_n$ consisting of these $m^n$ points is $(n,\varepsilon_0)$-separated. 
Hence $s_n(\varepsilon_0) \ge m^n$. 
Since $s_n(\varepsilon)$ is nondecreasing as $\varepsilon \to 0$, we have 
$ s_n(\varepsilon) \ge m^n $ for all $\varepsilon < \varepsilon_0$. Therefore,
\[
h_{\mathrm{top}}(f)
= \lim_{\varepsilon \to 0} \limsup_{n \to \infty} \frac{1}{n} \log s_n(\varepsilon)
\ge \lim_{\varepsilon \to 0} \limsup_{n \to \infty} \frac{1}{n} \log m^n
= \log m.
\]
Since $m$ can be chosen arbitrarily large, it follows that 
$h_{\mathrm{top}}(f) = \infty$.
\end{proof}

\begin{remark}
The Theorem \hyperlink{teoC}{C} can be extended to a more general setting.  
Let $X$ be a compact metric space without isolated points, and let $f : X \to 2^X$ be a set-valued map such that each $f(x)$ is open.  
In this case, $f$ is not necessarily an open set-valued map, since we do not assume that the images $f(x)$ are connected.  
Nevertheless, $f$ still has infinite topological entropy, and the argument follows the same general idea as in the proof of Theorem \hyperlink{teoC}{C}. 
More precisely, Lemma~\ref{lem.1} cannot be applied here because it requires connectedness, but Lemma~\ref{lem.2} can be refined to compensate for this and to establish an analogue of Proposition~\ref{prop:preparacao.entropia}.
\end{remark}

\section{Transitivity of Anosov diffeomorphisms} \label{anosov}

Anosov diffeomorphisms are the paradigmatic examples of uniformly hyperbolic systems and form the cornerstone of smooth hyperbolic dynamics. Yet, since their introduction in the 1960s, it remains an open question whether every Anosov diffeomorphism is transitive. In what follows, we reinterpret this classical problem in the framework of open set-valued maps.


In this section, we consider \( X \) to be a compact and connected manifold.  
Recall that an \emph{Anosov diffeomorphism} is a diffeomorphism \( f : X \to X \) such that \( X \) itself is a hyperbolic set for \( f \).  
Also, as introduced in Section~2, given a continuous map \( f : X \to X \) and \( \varepsilon > 0 \), we denote by 
\[
f_\varepsilon : X \to 2^X, \qquad f_\varepsilon(x) = B_\varepsilon(f(x)),
\]
the open set-valued map that associates to each point \( x \) the open ball of radius \( \varepsilon \) centered at \( f(x) \).

In general, if \( f : X \to X \) is transitive, then \( f_\varepsilon \) is also transitive for every \( \varepsilon > 0 \).  
The converse, however, does not hold: it may happen that \( f_\varepsilon \) is transitive for every \( \varepsilon > 0 \) even though \( f \) itself is not.  
For instance, this is precisely the case when \( f \) is the identity map on \( X \).  
In what follows, we show that the converse does hold when \( f \) is an Anosov diffeomorphism.  
To do so, we recall some basic concepts related to shadowing in hyperbolic systems; see, for instance, \cite{brin2002introduction}.

\begin{definition}
Given a diffeomorphism \( f : X \to X \), a \emph{\(\delta\)-orbit} of \( f \) is a sequence \( (x_i)_{i \in I} \subset X \) such that 
\[
d(x_{i+1}, f(x_i)) < \delta,
\]
where \( I \) is an interval of integers.
\end{definition}

\begin{lemma}[Shadowing Lemma]
Let \( \Lambda \) be a hyperbolic set for \( f : U \to M \).  
Then, for every \( \varepsilon > 0 \), there exists \( \delta > 0 \) such that if \( (x_k) \) is a finite or infinite \(\delta\)-orbit of \( f \) and \( \operatorname{dist}(x_k, \Lambda) < \delta \) for all \( k \),  
then there exists \( x \in \Lambda \) such that 
\[
\operatorname{dist}(f^k(x), x_k) < \varepsilon
\quad \text{for all } k.
\]
\end{lemma}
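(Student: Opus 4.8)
The plan is to convert the shadowing problem into a fixed-point problem on a space of bounded sequences and to solve it with the contraction mapping theorem, using hyperbolicity to control the linearization. First I would pass to an adapted (Lyapunov) metric so that the splitting $T_xM = E^s_x \oplus E^u_x$ over $\Lambda$ satisfies $\|Df_x|_{E^s_x}\| \le \lambda$ and $\|Df_x^{-1}|_{E^u_x}\| \le \lambda$ for a single $\lambda \in (0,1)$, and extend this (a priori only $\Lambda$-defined) continuous splitting to a small neighborhood $U_0 \supset \Lambda$. Since the $\delta$-orbit satisfies $\operatorname{dist}(x_k,\Lambda) < \delta$, every $x_k$ lies in $U_0$ once $\delta$ is small. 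Writing a candidate shadowing orbit as $y_k = \exp_{x_k}(v_k)$ with $v_k \in T_{x_k}M$ small, the condition that $(y_k)$ be a genuine orbit becomes the nonlinear equation $\Phi(v)=0$, where $\Phi(v)_k = v_{k+1} - \exp_{x_{k+1}}^{-1}\bigl(f(\exp_{x_k}(v_k))\bigr)$, posed on the Banach space $\ell^\infty$ of bounded sequences.

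The linear heart of the argument is the claim that the operator $L(v)_k = v_{k+1} - Df_{x_k}\,v_k$ on $\ell^\infty$ is invertible with inverse bounded by a constant $C=C(\lambda)$ independent of the pseudo-orbit. I would prove this by decomposing each $v_k$ along $E^s_{x_k}\oplus E^u_{x_k}$ and solving the two half-line recurrences separately: on the stable part one sums forward, on the unstable part one sums backward using $Df^{-1}$ on $E^u$, and both series converge geometrically with ratio $\lambda$, giving $\|L^{-1}\| \le 1/(1-\lambda)$ up to the angle between the subspaces. This uniform invertibility is exactly where uniform hyperbolicity enters, and it is the step I expect to be the main obstacle: one must control the splitting at the off-$\Lambda$ points $x_k$, and since $f(x_k)$ only $\delta$-approximates $x_{k+1}$ the cocycle $k\mapsto Df_{x_k}$ is merely an $O(\delta)$ perturbation of a genuinely invariant hyperbolic cocycle, so the estimate must be made uniform in $k$ by a robustness (cone) argument along an arbitrary pseudo-orbit.

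With the linear lemma in hand, the nonlinear step is a routine Newton--Kantorovich estimate. The pseudo-orbit condition $\operatorname{dist}(f(x_k),x_{k+1}) < \delta$ gives $\|\Phi(0)\|_\infty = O(\delta)$; the derivative $D\Phi(0)$ differs from $L$ by a term that is small because the splitting varies continuously and $f \in C^1$, so $D\Phi(0)$ is still invertible with a uniformly bounded inverse; and the second-order remainder of $\Phi$ is Lipschitz with a small constant on a small ball, uniformly by compactness of $\Lambda$. The contraction mapping theorem then yields a unique small solution $v$ with $\|v\|_\infty \le 2C\delta$, and choosing $\delta$ so small that $2C\delta < \varepsilon$ produces an honest orbit $(y_k)$ with $\operatorname{dist}(y_k,x_k) < \varepsilon$ for all $k$. (For a \emph{finite} $\delta$-orbit the same scheme runs on finite sequences, the boundary terms being absorbed into $C$.)

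Finally, to place the shadowing point inside $\Lambda$ as the statement requires, I would use local maximality of $\Lambda$, namely that every orbit remaining in a fixed small neighborhood of $\Lambda$ lies in $\Lambda$; since $\operatorname{dist}(y_k,\Lambda) \le \operatorname{dist}(y_k,x_k) + \operatorname{dist}(x_k,\Lambda) < \varepsilon+\delta$ for all $k$, the whole orbit stays in $\Lambda$ and one may take $x = y_0$. In the application to Anosov diffeomorphisms, where $\Lambda = X$ is the entire manifold, this last point is automatic and no maximality hypothesis is needed.
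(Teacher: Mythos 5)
The paper does not actually prove this lemma: it is quoted as a classical result (with a pointer to the cited textbook of Brin and Stuck) and is only ever applied with $\Lambda = X$ for an Anosov diffeomorphism. Your sketch is the standard contraction-mapping (Newton--Kantorovich) proof of the shadowing lemma, and as an outline it is essentially correct: the adapted metric, the uniform invertibility of $L(v)_k = v_{k+1} - Df_{x_k}v_k$ obtained by summing the stable component forward and the unstable component backward along the pseudo-orbit, and the passage to the nonlinear equation $\Phi(v)=0$ via a uniformly small Lipschitz remainder are exactly the ingredients of the textbook argument. Two points deserve care if you were to write it out in full. First, $Df_{x_k}$ maps $T_{x_k}M$ to $T_{f(x_k)}M$, not to $T_{x_{k+1}}M$, so even defining $L$ on $\ell^\infty$ requires an identification of nearby tangent spaces (parallel transport or the exponential chart); the resulting $O(\delta)$ error must be absorbed into the invertibility estimate together with the cone-field robustness you mention, and the same identification is what makes $D\Phi(0)$ an $O(\delta)$ perturbation of $L$. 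Second, as you correctly flag, the conclusion $x \in \Lambda$ is not available for a general hyperbolic set without local maximality; what the contraction argument alone delivers is a shadowing point in a small neighborhood of $\Lambda$. The lemma as stated in the paper is therefore slightly stronger than your argument (or the general theorem) gives, but since the only application here is $\Lambda = X$ for an Anosov diffeomorphism, the discrepancy is harmless, exactly as you observe.
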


We are now ready to prove the main result of this section.

\begin{thmA}
Let \( f : X \to X \) be an Anosov diffeomorphism.  
If \( f_\varepsilon : X \to 2^X \) is transitive for every \( \varepsilon > 0 \), then \( f \) is transitive.
\end{thmA}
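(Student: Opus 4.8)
The plan is to prove transitivity of $f$ directly from the definition: given nonempty open sets $U, V \subset X$, I will produce an integer $n \ge 1$ and a point $x \in U$ with $f^n(x) \in V$. Two ingredients drive the argument. The first is the identification, already noted in the introduction, between trajectories of $f_\delta$ and $\delta$-pseudo-orbits of $f$: a finite sequence $(z_k)$ is a trajectory of $f_\delta$ precisely when $\operatorname{dist}(z_{k+1}, f(z_k)) < \delta$ for all $k$. The second is the Shadowing Lemma, applied with the hyperbolic set $\Lambda = X$; since $X$ is the whole space, the side condition $\operatorname{dist}(z_k, \Lambda) < \delta$ holds automatically, so every $\delta$-pseudo-orbit is $\varepsilon$-shadowed by a genuine orbit.

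First I would fix points $u \in U$ and $v \in V$ together with a radius $\rho > 0$ such that $B_{2\rho}(u) \subset U$ and $B_{2\rho}(v) \subset V$. Applying the Shadowing Lemma with tolerance $\rho$ yields a constant $\delta > 0$, which I may shrink so that $\delta < \rho$, with the property that every finite $\delta$-pseudo-orbit of $f$ is $\rho$-shadowed by a true orbit. Next I would invoke the hypothesis at this particular scale: since $f_\delta$ is transitive, applied to the nonempty open sets $A = B_\delta(u)$ and $B = B_\delta(v)$ there is an $n$ with $f_\delta^{\,n}(A) \cap B \neq \emptyset$. Unwinding this gives a finite trajectory $z_0, z_1, \dots, z_n$ of $f_\delta$ with $z_0 \in B_\delta(u)$ and $z_n \in B_\delta(v)$ — that is, a $\delta$-pseudo-orbit of $f$ running from near $u$ to near $v$.

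I would then shadow this pseudo-orbit: the Shadowing Lemma produces $x \in X$ with $\operatorname{dist}(f^k(x), z_k) < \rho$ for all $0 \le k \le n$. The conclusion is a matter of the triangle inequality, namely $\operatorname{dist}(x, u) \le \operatorname{dist}(x, z_0) + \operatorname{dist}(z_0, u) < \rho + \delta < 2\rho$, so $x \in U$, and likewise $\operatorname{dist}(f^n(x), v) < \rho + \delta < 2\rho$, so $f^n(x) \in V$; hence $f^n(U) \cap V \neq \emptyset$. The only genuinely delicate point is the bookkeeping of scales, and in particular the order of the quantifiers: the shadowing constant $\delta$ must be fixed first, and transitivity of $f_\delta$ is then invoked precisely at that scale — this is exactly why the hypothesis is needed for every $\varepsilon > 0$ rather than for a single value. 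One should also check that the trajectory can be taken of positive length $n \ge 1$ (if instead $n = 0$ one already has $U \cap V \neq \emptyset$), which poses no difficulty.
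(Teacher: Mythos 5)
Your proof is correct and rests on the same two ingredients as the paper's: the identification of trajectories of $f_\delta$ with $\delta$-pseudo-orbits of $f$, and the Shadowing Lemma applied with $\Lambda = X$, with the shadowing constant $\delta$ fixed \emph{before} the transitivity of $f_\delta$ is invoked at that scale. The only difference is one of packaging --- the paper constructs a single dense $\delta$-pseudo-orbit from a countable basis and shadows it globally, whereas you shadow one finite pseudo-orbit per pair $(U,V)$ --- and your local version is, if anything, slightly more economical.
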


\begin{proof}
First observe that if \( f_\delta \) is transitive, then there exists a dense \(\delta\)-orbit of \( f \).  
Indeed, let \( \{U_1, U_2, \dots\} \) be a countable basis of open sets for \( X \).  
By the transitivity of \( f_\delta \), given \( x_1 \in U_1 \), there exists \( k_1 \in \mathbb{N} \) such that 
\( f^{k_1}(x_1) \cap U_2 \neq \emptyset \).  
Choose \( x_2 \in f^{k_1}(x_1) \cap U_2 \).  
Repeating this process, we construct a sequence \( (x_n)_{n \ge 1} \) with \( x_n \in U_n \), forming a dense trajectory of the set-valued map \( f_\delta \).  
Notice that any trajectory of \( f_\delta \) is, by definition, a \(\delta\)-orbit of \( f \).  
Thus, for each \( \delta > 0 \), there exists a dense \(\delta\)-orbit of \( f \).

By the Shadowing Lemma, for every \( \varepsilon > 0 \) there exists \( \delta > 0 \) such that every \(\delta\)-orbit of \( f \) is \(\frac{\varepsilon}{2}\)-shadowed by a true orbit of \( f \).  
Hence, there exists a point \( x_\varepsilon \in X \) such that the sequence \( (f^n(x_\varepsilon))_{n \ge 0} \) \(\frac{\varepsilon}{2}\)-shadows the dense \(\delta\)-orbit \( (x_n)_{n \ge 0} \) constructed above.  
Since \( (x_n) \) is dense, the orbit of \( x_\varepsilon \) intersects every \(\varepsilon\)-neighborhood of every point of \( X \) infinitely many times.

To prove that \( f \) is transitive, let \( U, V \subset X \) be nonempty open sets.  
Choose points \( u, v \in X \) and \( \varepsilon > 0 \) such that \( B_\varepsilon(u) \subset U \) and \( B_\varepsilon(v) \subset V \).  
With $x_\varepsilon$ constructed above, there exist integers \( m, n \in \mathbb{N} \), with \( n > m \), such that 
\( f^m(x_\varepsilon) \in U \) and \( f^n(x_\varepsilon) \in V \).  
Therefore, $f^{n-m}(U) \cap V \neq \emptyset$ and we conclude that \( f \) is transitive.
\end{proof}

\section{Acknowledgment}
The authors would like to thank Vitor Araújo (IME-UFBA) and Bernardo Carvalho (LNCC) for their introduction and suggestions regarding the topics covered in this work.
\bibliographystyle{siam}
\bibliography{bib}

\end{document}